\documentclass[10pt]{article}
\usepackage{amsmath,amssymb,amsthm}
\usepackage{graphicx}
\usepackage{amssymb}
\usepackage{graphicx}
\usepackage{amsmath}
\usepackage{amsfonts,amsthm}
\usepackage[english]{babel}
\usepackage[active]{srcltx}
\usepackage[latin1]{inputenc}
\usepackage{epsfig}
\usepackage[all]{xy}
\usepackage{lscape}
\usepackage{amsfonts,amsthm}
\usepackage{amsmath,amssymb,amsthm}
\usepackage{epstopdf}
\DeclareGraphicsRule{.tif}{png}{.png}{`convert #1 `dirname #1`/`basename #1
.tif`.png}

\textwidth = 6.5 in
\textheight = 9 in
\oddsidemargin = 0.0 in
\evensidemargin = 0.0 in
\topmargin = 0.0 in
\headheight = 0.0 in
\headsep = 0.0 in
\parskip = 0.2in
\parindent = 0.0in

\newtheorem{theorem}{Theorem}

\newtheorem{definition}{Definition}
\newtheorem{proposition}{Proposition}
\newtheorem{lemma}{Lemma}
\newtheorem{remark}{Remark}

\newcommand{\cC}{{\cal C}}

\newcommand{\cV}{{\cal V}}
\newcommand{\cW}{{\cal W}}


\title{Linear Hyperdoctrines and Comodules}
\author{Mariana Haim and Octavio Malherbe}

\date{December 2016}

\theoremstyle{plain}

\begin{document}

\maketitle
\setlength{\parindent}{0pt}

\begin{abstract}
	\noindent
	In this exposition,  we get examples of what is called a ``linear hyperdoctrine", based on categories of comodules indexed by coalgebras, as in \cite{kn:GP}. This structures 
	can model first order linear logic.
	
\end{abstract}

\section{Introduction}

The aim of this work is to present indexed families of models for Linear Logic coming from Coalgebra Theory. 
This paper connect two areas of research: on the one hand, Logic, more precisely, Linear Logic and a consolidate linear logic version of Hyperdoctrine and on the other hand the field of Coalgebras and Comodules Theory. Moreover, we discuss to use this same paradigm to give a model for Linear Polymorphism. \\
\ \\
In logic, when we allow variables to range over subsets we are dealing with second-order logic. In second order lambda calculus there are two kind of lambda abstractions: over term variables and over type variables, i.e., types also appear as parameters, the idea of variable types, which may be explicitly abstracted and then computed by evaluating with some concrete types.\\
\ \\
We consider the more general framework of linear lambda calculus which is based on intuitionistic linear logic. Girard's Intuitionistic Linear Logic~\cite{kn:G} is a resource sensitive logic, i.e., a Logic with resources that are controlled
by an operator named ``bang" and declared by the symbol ``!": formulas must be used exactly once unless the logical
operator ! is invoked allowing a formula to be used as many times as
required. It formalises the requirement that a given logical assumption (or resource) can only be used once: data, in some sense, is devoured by functions.
Moreover, structural rules such as Weakening and Contraction of Gentzen's sequent calculus are
removed and reintroduced in a controlled manner by this logical operator.\\
\ \\
In 1972 Girard and independently in 1974 Reynolds developed a polymorphic version of the lambda calculus. ``The system F", as Girard named, it is based on the idea of variable types: if $\sigma$ and $\alpha$ are type variables, then $\Lambda \alpha. \sigma$ is a type. The idea is that the variable type $\alpha$ is bounded in ``$\Lambda \alpha.\sigma$" (see~\cite{kn:G2}) and it is declared as it were a term in the calculus. From the computer science point of view, the purpose of this symbol is to make explicit common features on the manipulation of programs in which the same algorithm is invoked in different situations. (It is more common to use the symbol $\forall$ ``an universal quantifier" instead of $\Lambda$ to denote this abstraction at the level of types, as parameters in type expressions). Types appears as parameters not only in other types but also in terms: if we consider expressions like the identity function $\lambda x: A.x$, the instruction given by the term, in some sense, is completely independent of the parameter $A$. We can stress this by writing a term $\Lambda A. \lambda x: A.x.$ Therefore, we have two different abstractions: at the level of term variable and also at the type variable denoted by two kinds of variables.
Of course, this implies that we also have two kinds of evaluations of the expression at both levels. In the same way as the expression $\lambda x: A.x$ of our calculus has type $A\rightarrow A$ we have a symbol to denote the type of the new abstraction $\Lambda A. \lambda x:A.x$ which will be denoted by $\forall A:A \rightarrow A$.\\
\ \\
Therefore, lambda notation provides two differents means for writing expressions which denote two form of abstraction. The simbol $\Lambda X$ operates abstractions over type variable in the same fashion as $\lambda x$. It leads to terms denoting polymorphic functions.
But also, inside terms we have dependency of types, i.e., types as parameters in term:
if $t$ is a term of type $\sigma$ and $\alpha$ is a type variable, then $\Lambda \alpha.t $ is a term of type $\forall \alpha.\sigma$. The variable $\alpha$ is bounded in $\Lambda \alpha .t.$ Moreover, if $t$ is a term of type $\forall \alpha.\sigma$ and $\tau$ is a type , then $t(\tau)$ is a term of type $\sigma (\tau/\alpha)$. Consequently, there are two kind of reduction of abstractions given by: $(\lambda x.t(x))u= t(u/x)$, and $(\Lambda \alpha. t(\alpha))(\tau)=t(\tau/\alpha)$.

Coalgebras are algebraic structures that are partially dual to algebras: the dual vector space of a $\Bbbk$-coalgebra is a $\Bbbk$-algebra
and, although the dualizing functor is not an equivalence of categories, it
does have an adjoint on the right. These two functors
restrict to a contravariant equivalence between finite dimensional
coalgebras and finite dimensional algebras. 
Due to this partial duality, many
definitions in the theory of coalgebras were suggested by the corresponding
concepts for algebras (such as the notion of comodule, the cotensor product of comodules, and the
statements of many results). \\
The partial duality restricts to cocommutative coalgebras (partially dual to the category of commutative algebras). In many senses, the category $Coalg$ of cocommutative coalgebras
is much better than the category of commutative algebras. For example, it is Cartesian closed: the usual tensor product of vector spaces carries a coalgebra structure corresponding to the categorical product, and the set of morphisms from one coalgebra to another carry a natural structure of coalgebra (that ensures closedness of the cartesian structure). Also, it is well known that many categories occuring in algebra are enriched over Coalg. 

Various kind of models have appeared in the literature.
Girard proposed domain theory models: coherence spaces, qualitative domains and stable functions yielding a solid semantic~\cite{kn:G2}.
Associated with Scott domains and continuous functions are several proposals for models of polymorphism such as in \cite{kn:CGW} where types are interpreted as Scott domains and types with variables are continuous functors on a category of Scott domain.

Also, in this direction, Maneggia models (\cite{kn:M}) are based on the domain theoretic characterization given by Coquand, Gunter and Winskel and the central notion of lax limit in partial orders and order preserving maps obtaing sound models of the second order linear type theory developed by Plotkin named $F_{DILL}$ (see \cite{kn:Pl}).
More precisely, lax limits construction gives a right adjoint to the diagonal into the category of functors and lax transformations. This adjunction is what allows the modelling universal quantification. She obtains a linear hyperdoctrine by lifting the structure of a linear category to the category of functors and lax transformations obtaining a linear category easy to index.

Another approach to deal with system $F$ is to consider realisability models, as in \cite{kn:H}. Also, Abramsky and Lenisa (\cite{kn:AyL}) discuss full completeness in models based on the category of Partial Equivalence Relations over a Linear Combinatory Algebra. They introduce the notion of adjoint hyperdoctrine in order to study System F which consists of a co-Kleisli indexed
category of a linear indexed category. 

Related also with the topic but from a more foundational point of view, Reynolds has shown in~\cite{kn:R} how the impredicativity of system F obliges to consider models other than a naive set theoretic interpretation. Just by cardinality considerations this interpretation is impossible to be set theoretical. Pitts has given an alternative presentation of models of polymorphism based on constructive set theory~\cite{kn:Pi}.

A more abstract approach, and close to our point of view, is Seely's work. He introduces a categorical structure for interpreting polymorphic lambda calculus; he considered a linear hyperdoctrine in which the fibred is given by a monoidal linear category~\cite{kn:S2}. In some sense, based on this abstraction, in this paper we give an alternative presentation to the usual "domain order category" presentation of the subject.

A fundamental construction in category theory is the notion of an indexed category. Grunenfelder and Par\'e obtain, in \cite{kn:GP}, sufficient conditions in order that all the ingredients we need work in the field of comodules indexed by coalgebras. The organization of our model is based in their work, in terms of the categorical apparatus of an indexed category captured by them.

A polymorphism for the lambda calculus is understood semantically as an indexed category with some extra conditions.
For this purpose, the intuitionistic linear logic $(\otimes, I, \multimap,!)$ will be enriched with a universal quantification $\forall$ in the sense we mention above, of Girard-Reynolds. A key feature for this construction is the right choice of a subcategory of the category of coalgebras allowing all the conditions fully work. Our contribution is to understand this aspect.
It turns out that cosemisimple coalgebras are the correct choice in order to make all conditions work at the same time. We use this framework to describe the semantic of an hyperdoctrine by choosing to treat cosemisimple coalgebras as if they were the parameter that characterize all the structure.
Specifically, the construction of the model depends on the behavior of an indexed category of comodules that is determined by their parameter when we consider the category of cosemisimple coalgebras. Basically, this is allowed by the following fundamental equivalence: $C$ is cosemisimple if and only if is every $C$-comodule is reducible.
\ \\
\\
It is worth noting the aspects not covered by this paper. Firstly, we concentrate on the categorical aspects of the model construction and we do not readdress the syntax of the calculus itself (see~\cite{kn:Pl},~\cite{kn:M} for a revision). Secondly, the paper focalizes on technical aspects of comodules as a first step to deal with polymorphisms and ignores general properties of its semantics such as soundness, completenes, etc. This paper describes a model of first order logic but the most obvious question that we have not yet addressed is whether this model also accounts for a model of higher order logic with the so called generic object condition. In future work, we are going to deal with this issue.


{\em Aknowledgements:} The authors want to thank Jonas Frey for his valuable comments.


\textit{Organization of the Paper:} In Section $2$ we expose the Preliminaries and fix the notations that will be necessary on Category Theory and Coalgebra Theory. Then, in Section $3$ we present a family of models for Linear Logic based on the category of comodules under suitable coalgebras. We go to the indexed context in Section $4$ and prove, in Section $5$ that it will be a good context for constructing models of Linear Hyperdoctrines. Finally in Section $6$ we discuss conditions about the generic object.

\section{Preliminaries and Notations}

\subsection{Preliminaries on monoidal and cartesian categories}
For general preliminaries and notations on categories we refer to~\cite{kn:McL}. \\
\begin{definition}
	\rm
	A \textit{monoidal} category, also called \textit{tensor} category, is a category $\cV$ with an identity object $I\in\cV$, a bifunctor $\otimes:\cV\times\cV\rightarrow \cV$ and natural isomorphisms $\rho:A\otimes I\stackrel{\cong}\rightarrow A$, $\lambda:I\otimes A\stackrel{\cong}\rightarrow A$, $\alpha:A\otimes (B\otimes C)\stackrel{\cong}\rightarrow (A\otimes B)\otimes C$, satisfying the following coherence commutativity axioms:
	
	\[
	\xymatrix@=14pt{
		A\otimes (I\otimes B) \ar[dr]_{1\otimes\lambda}  \ar[rr]^{\alpha}& & (A\otimes I) \otimes B \ar[ld]^{\rho\otimes 1}  \\
		&  A\otimes B &      }
	\]
	and
	\[
	\xymatrix@=24pt{
		A\otimes (B\otimes (C\otimes D))\ar[d]^{\alpha}\ar[r]^{\alpha}&(A\otimes B)\otimes (C\otimes D) \ar[r]^{\alpha} &  ((A\otimes B)\otimes C)\otimes D \ar[d]^{\alpha}\\
		(A\otimes ((B\otimes C)\otimes D)\ar[rr]_{\alpha}&      &    (A\otimes (B\otimes C))\otimes D}
	\]
	
	A monoidal category $(\cV,\otimes,I,\alpha,\rho,\lambda)$ is said to be \textit{symmetric} if there is a natural isomorphism $\sigma:A\otimes B\stackrel{\cong}\rightarrow B\otimes A$ which satisfies the coherence axioms given by the commutativity of the following diagrams:
	$$
	\begin{array}{cc}
	\xymatrix@=14pt{
		A\otimes B \ar[r]^{\sigma}  \ar[rd]_{id}& B \otimes A \ar[d]^{\sigma}  \\
		&  A\otimes B
	} \hspace{3cm}&
	\xymatrix@=14pt{
		A\otimes I \ar[r]^{\sigma}  \ar[rd]_{\rho}& I\otimes A \ar[d]^{\lambda},  \\
		&  A
	}\end{array}
	\ \
	\xymatrix{
		A\otimes (B\otimes C)\ar[d]^{1\otimes\sigma} \ar[r]^{\alpha}  & (A\otimes B)\otimes C\ar[r]^{\sigma}  & C\otimes (A\otimes B) \ar[d]^{\alpha}  \\
		A\otimes (C\otimes B)\ar[r]^{\alpha} &( A\otimes C)\otimes B \ar[r]^{\sigma\otimes 1} & (C\otimes A)\otimes B.      }
	$$
	
\end{definition}

\begin{definition}
	\rm
	A \textit{closed} monoidal category is a symmetric monoidal category $\cV$ for which each functor $-\otimes B:\cV\rightarrow\cV$ has a right adjoint $[B,-]:\cV\rightarrow\cV$:  $$\cV(A\otimes B,C)\cong\cV(A,[B,C])$$.
\end{definition}

\begin{definition}
	\rm
	A \textit{monoidal functor} $(F,m_{A,B},m_I)$ between monoidal categories $(\cV,\otimes,I,\alpha,\rho,\lambda)$ and $(\cW,\otimes',I',\alpha',\rho',\lambda')$ is a functor $F:\cV\rightarrow\cW$ equipped with:
	\begin{itemize}
		\item[-]
		morphisms $m_{A,B}:F(A)\otimes'F(B)\rightarrow F(A\otimes B)$ natural in $A$ and $B$ ,
		\item[-] for the units morphism $m_I:I'\rightarrow F(I)$
	\end{itemize}
	which satisfy the following coherence axioms:
	\[
	\xymatrix{
		FA\otimes' (FB\otimes' FC)\ar[d]^{\alpha'} \ar[r]^{1\otimes' m}  & FA\otimes' F(B\otimes C)\ar[r]^{m}  & F(A\otimes (B\otimes C)) \ar[d]^{F\alpha}  \\
		(FA\otimes' FB)\otimes FC\ar[r]^{m\otimes' 1} &F( A\otimes B)\otimes' FC\ar[r]^{m} & F((A\otimes B)\otimes C)  }
	\]
	
	$$\begin{array}{cc}
	\xymatrix@=20pt{
		FA\otimes' I' \ar[r]^{\rho'}  \ar[d]_{1\otimes'm}& FA   \\
		FA\otimes' FI  \ar[r]_{m}                       &  F(A\otimes I) \ar[u]^{F\rho}
	} \hspace{3cm}&
	\xymatrix@=20pt{
		I'\otimes' FA \ar[d]^{m\otimes'1} \ar[r]^{\lambda'}  &  FA  \\
		FI\otimes'FA   \ar[r]_{m}        &  F(I\otimes A )\ar[u]^{F(\lambda)}
	}\end{array}$$
\end{definition}

A monoidal functor is \textit{strong} when $m_I$ and for every $A$ and $B$ $m_{A,B}$ are isomorphisms. It is said to be \textit{strict} when all the $m_{A,B}$ and $m_I$ are identities.\\


\begin{definition}
	\label{MONOIDAL NATURAL TRANFORMATION}
	\rm
	A \textit{monoidal natural transformation} $\theta:(F,m)\rightarrow (G,n)$ between monoidal functors is a natural transformation
	$\theta_A:FA\rightarrow GA$ such that the following axioms hold:
	$$\begin{array}{cc}
	\xymatrix@=20pt{
		FA\otimes' FB \ar[rr]^{m}\ar[d]_{\theta_{A}\otimes'\theta_{B}}
		&& F(A\otimes B)\ar[d]^{\theta_{A\otimes B}}\\
		GA\otimes' GB \ar[rr]_{n}
		&& G(A\otimes B)
	}\hspace{3cm}&
	\xymatrix@=25pt{
		I' \ar[r]^{m_I}\ar[rd]_{n_I}
		& FI\ar[d]^{\theta_I}\\
		& GI
	}\end{array}$$
\end{definition}

\begin{definition}
	\rm
	A \textit{monoidal adjunction}
	\[\xymatrix{
		(\cV,\otimes ,I)\ar@<1ex>[r]^{(F,m)}& (\cW,\otimes',I') \ar@<1ex>[l]^{(G,n)}_{\bot}}\]
	between two monoidal functors $(F,m)$ and $(G,n)$ consists of an adjunction $(F,G,\eta,\varepsilon)$ in which the unit $\eta:Id\Rightarrow G\circ F$ and the counit $\varepsilon:F\circ G\Rightarrow Id$ are monoidal natural tranformations, as defined~(\ref{MONOIDAL NATURAL TRANFORMATION}).
\end{definition}
\begin{proposition}\rm
	\label{Kelly}
	Let $(F,m):\cC \rightarrow\cC'$ be a monoidal functor. Then $F$ has a
	right adjoint $G$ for which the adjunction $(F,m)\dashv (G,n)$ is monoidal
	if and only if $F$ has a right adjoint $F\dashv G$ and $F$ is strong
	monoidal.
\end{proposition}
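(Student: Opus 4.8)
The plan is to prove the two implications separately, both via the calculus of mates for the underlying adjunction $F\dashv G$.

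For the direction $(\Leftarrow)$, assume $F\dashv G$ with unit $\eta$ and counit $\varepsilon$, and that $(F,m)$ is strong, so every $m_{A,B}$ and $m_I$ is invertible. I would produce a lax monoidal structure $n$ on $G$ by transposing $m^{-1}$: set $n_{A,B}=G\bigl((\varepsilon_A\otimes'\varepsilon_B)\circ m^{-1}_{GA,GB}\bigr)\circ\eta_{GA\otimes GB}\colon GA\otimes GB\to G(A\otimes' B)$ and $n_I=G(m_I^{-1})\circ\eta_I\colon I\to GI'$. It then remains to check three things: (i) $(G,n)$ satisfies the coherence axioms of a monoidal functor; (ii) $\eta$ is a monoidal natural transformation; (iii) $\varepsilon$ is a monoidal natural transformation.

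Items (ii) and (iii) are short diagram chases. For $\varepsilon$, the tensor clause $\varepsilon_{A\otimes'B}\circ F(n_{A,B})\circ m_{GA,GB}=\varepsilon_A\otimes'\varepsilon_B$ is essentially the defining adjunction identity for the mate together with one triangle identity, and the unit clause is analogous. For $\eta$, one must show $G(m_{A,B})\circ n_{FA,FB}\circ(\eta_A\otimes\eta_B)=\eta_{A\otimes B}$; expanding $n_{FA,FB}$, pushing $\eta_A\otimes\eta_B$ inward by naturality of $\eta$, then by naturality of $m$, and collapsing the resulting $\varepsilon_{F(-)}\circ F\eta_{(-)}$ by a triangle identity, the left side reduces to $G(m_{A,B})\circ G(m^{-1}_{A,B})\circ\eta_{A\otimes B}$ — and this is exactly the point at which invertibility of $m$ (strongness) is indispensable. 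Item (i) is the only laborious point: the associativity hexagon and the two unit triangles for $(G,n)$ must be deduced from those of $(F,m)$. The clean way is to note that, with respect to the iterated product adjunctions $F^{\times k}\dashv G^{\times k}$, the data $n$ is precisely the mate of $m$ across the naturality squares relating $\otimes$ and $\otimes'$; since the mates bijection preserves pasting of $2$-cells, each coherence equation for $n$ is the mate of the corresponding coherence equation for $m$, hence holds. Performing this instead as explicit pasting-diagram bookkeeping is several pages of routine manipulation, so I would present it through mates.

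For the direction $(\Rightarrow)$, assume $(F,m)\dashv(G,n)$ is a monoidal adjunction; the right adjoint exists by hypothesis, so it suffices to show $F$ is strong. I would exhibit explicit inverses. Put $\bar m_{A,B}=\varepsilon_{FA\otimes'FB}\circ F\bigl(n_{FA,FB}\circ(\eta_A\otimes\eta_B)\bigr)\colon F(A\otimes B)\to FA\otimes'FB$. Then $\bar m_{A,B}\circ m_{A,B}=\mathrm{id}$ follows by sliding $m_{A,B}$ past $F(\eta_A\otimes\eta_B)$ using naturality of $m$, applying the tensor clause of the monoidality of $\varepsilon$, and using the triangle identity; and $m_{A,B}\circ\bar m_{A,B}=\mathrm{id}$ follows by sliding $m_{A,B}$ past $\varepsilon$ using naturality of $\varepsilon$, applying the tensor clause of the monoidality of $\eta$, and again a triangle identity. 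The same argument with the unit clauses of the monoidality of $\varepsilon$ and $\eta$ in place of the tensor clauses shows $m_I$ is invertible with inverse $\varepsilon_{I'}\circ F(n_I)$. Hence $(F,m)$ is strong.

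The main obstacle is item (i) above — checking that the transposed data $n$ genuinely assembles into a lax monoidal structure; everything else is a chain of naturality squares and triangle identities. It is worth flagging explicitly that the step forcing $m$ to be invertible is the monoidality of the unit $\eta$, which is precisely why a merely lax $(F,m)$ does not suffice; this proposition is the monoidal instance of Kelly's doctrinal adjunction.
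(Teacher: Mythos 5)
Your argument is correct and is precisely the doctrinal-adjunction proof of Kelly that the paper itself invokes: it gives no proof of its own beyond the citation to \cite{kn:K}, and your construction of $n$ as the mate of $m^{-1}$, the transfer of coherence via pasting of mates, and the explicit two-sided inverse $\bar m_{A,B}=\varepsilon_{FA\otimes'FB}\circ F\bigl(n_{FA,FB}\circ(\eta_A\otimes\eta_B)\bigr)$ in the converse direction are exactly the standard argument from that reference. Nothing to correct.
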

\begin{proof}\cite{kn:K}
\end{proof}

We recall that a \textit{cartesian category} is a category admitting finite products (products of a finite family of objects). Equivalently, a cartesian category is a category admitting binary products and a terminal object (the product of the empty set family of objects).

\subsection{Coalgebras and their comodules}\label{cosemisimple}

For general preliminaries on coalgebras we refer to \cite{kn:DR}. \\
\ \\
We work in the category $Vec$ of vector spaces over a fixed field $\Bbbk$, equiped with the usual tensor product over $\Bbbk$ that we denote by $\otimes$ and call $\tau: \otimes\Rightarrow \otimes^{op}$ the usual transposition, i.e. $\tau_{V,W}(v\otimes w)=w\otimes v$. We will write simply $\tau(v\otimes w)=w \otimes v$.\\
\  \\
 Explicitely, a $\Bbbk$-coalgebra is a data $(C,\Delta, \varepsilon)$ in $Vec$, where $\Delta: C\to C\otimes C$ (the {\em comultiplication} of the coalgebra) is coassociative and $\varepsilon: C\to \Bbbk$ is a {\em counit} for $\Delta$. This is dual to the notion of a $\Bbbk$-algebra. \\
 We use Sweedler's notation: if $(C,\Delta)$ is a coalgebra and $x\in C$, it is usual to denote
 $$
 \Delta(x)=\sum x_1\otimes x_2 \in C\otimes C.
 $$
 Coassociativity is expressed by $\sum (x_1)_1\otimes (x_1)_2\otimes x_2=\sum x_1 \otimes (x_2)_1\otimes (x_2)_2$ so this element is denoted by $\sum x_1\otimes x_2 \otimes x_3$.\\
 Counitality axioms are expressed by the equalities $\sum \varepsilon(x_1)x_2=x=\sum x_1\varepsilon(x_2)$.
 \\

 We assume during all the paper that $C$ is a cocommutative coalgebra over $\Bbbk$.

 The coalgebra being cocommutative means that $\Delta=\tau \circ \Delta$.  (Note that in Sweedler's notation, cocommutativity is expressed by $\sum x_1\otimes x_2=\sum x_2\otimes x_1$.) \\

Morphisms of coalgebras are $\Bbbk$-linear maps that preserve comultiplications and counits.\\
In Sweedler's notation, $f:C\to D$ is a morphism of coalgebras if and only if
$$
\sum f(x)_1\otimes f(x)_2=\sum f(x_1)\otimes f(x_2) \ \mbox{  and  } \sum \varepsilon_D \circ f=\varepsilon_C.
$$
\ \\
Dualising the notion of module over an algebra, we get that a left $C$-comodule is a pair $(V,v)$ in $Vec$ such that $v:V\to V\otimes C$ verifies the following two conditions:
\begin{itemize}
	\item $(id_V \otimes \Delta)\circ v=(v\otimes id_C)\circ v$,
	\item $( id_V\otimes \varepsilon)\circ v=id_V$ (we are using the identification $V \otimes \Bbbk\cong V$).
\end{itemize}
We say that $v$ is a {\em right coaction} on $V$.\\
\ \\

Many of the notions we deal with, have a left and a right version for general coalgebras. We assume all the time that $C$ is cocommutative, so there is no distinction.
In  particular, left comodules are also right comodules and viceversa so we will talk about $C$-comodules.\\
\ \\

Sweedler's notation is also used for comodules: for $x\in V$, we note
$$
v(x)=\sum x_0\otimes x_1 \in V\otimes C.
$$
The axioms of coaction give:
$$
\sum x_0\varepsilon(x_1)=x
$$
and
$$
\sum x_0\otimes (x_1)_1 \otimes (x_1)_2=\sum (x_0)_0\otimes (x_0)_1\otimes x_1
$$
and this term is written $\sum x_0 \otimes x_1\otimes x_2$.\\
\ \\

We denote by $Vec^C$ the category of $C$-comodules with morphisms $f:(V,v)\to (W,w)$ such that $f:V\to W$ is a $\Bbbk$-linear map such that $(f\otimes id_C)\circ v=w \circ f$.\\
In Sweedler's notation $f$ is a morphism of comodules if $\sum f(v)_0\otimes f(v)_1=\sum f(v_0)\otimes v_1$.
\subsubsection{Morphisms and cotensor product of comodules.}\label{ss:homyprod}
Let $(V,v)$ and $(W,w)$ be $C$-comodules, where $v$ and $w$ are right coactions.\\
\ \\
There is a structure of $C$-comodule denoted by $V\otimes^C W$ in the vector space generated by 
$\{x\otimes y \in V\otimes W\mid v(x)\otimes y=x\otimes \tau\left (w(y)\right )\}$ where the coaction is defined by $\delta(x\otimes y)=x\otimes w(y)$ (note that this last term is equal to $(id_V\otimes \tau) (v(x)\otimes y)$).\\
Of course, there is a natural definition of an endofunctor $(V\otimes^C-)$ of $Vec^C$ for each $C$-comodule $(V,v)$. We will see in Lemma \ref{cotensor} that $(Vec^C,\otimes^C,C)$ is a symmetric monoidal category. Moreover, it can be seen that $(V\otimes^C -)$ preserves coproducts but not necessarily colimits.\\
\ \\
We can also consider the vector space $Hom^C(V,W)$ of all morphisms of $C$-comodules from $V$ to $W$ that induces a covariant and a contravariant functor denoted by $Hom^C(V,-), Hom^C(-,W): Vec^C\to Vec$.

\subsubsection{Coflat and injective comodules}

This section deals with exactness of the functors mentioned above and motivates the necessity of dealing with cosemisimple coalgebras (see section \ref{ss:coss}). For more details we refer to \cite{kn:BW}.
\begin{definition}\rm
	An object in a (locally small) abelian category $\mathcal{C}$ is said to be \textit{injective}, if the contravariant functor $Hom_{\mathcal C}(-,V):\mathcal{C}\to Set$ takes monomorphisms into epimorphisms.
\end{definition}
\begin{remark}\rm \label{r:injective}
	\begin{enumerate}
		\item In the particular case in which the category ${\mathcal C}$ is $Vec^C$, injectivity of an object $V$ can be state as the right exactness of the functor $Hom^C(V,-): Vec^C\to Vec$.
		\item The contravariant functor $Hom^C(-,V):Vec^C\to Vec$  is always left exact. The comodule $V$ is injective as an object in $Vec^C$ if and only if the mentioned functor is exact.
		\item If $V$ is an injective $C$-comodule and $V\subseteq W$ in $Vec^C$ then $V$ is necessarily a direct summand of $W$.
	\end{enumerate}
\end{remark}
\begin{definition}\rm
	A comodule $V$ is said to be \textit{coflat} if the (covariant) functor $(V\otimes^C -): Vec^C \to Vec$ preserves epimorphisms.
\end{definition}
\begin{remark}\rm \label{r:coflat}
	\begin{itemize}
		
		\item The functor $(V\otimes^C-): Vec^C \to Vec$ is always left exact. The comodule $V$ is coflat if and only if the mentioned functor is exact.
		\item In particular, if $V$ is coflat, the functor $V\otimes^C-$ preserves all colimits.
		\item As $X \otimes^C C\cong X$ is a natural isomorphism in $X$, it is clear that $C$ is coflat as a $C$-comodule.
		\item A direct sum of coflat $C$-comodules is a coflat $C$-comodule, by distributivity of $\otimes^C$ over $\oplus$.
		\item It is easy to verify that if $V$ is a coflat $C$-comodule and $W$ is a direct summand of $V$ in $Vec^C$, then $W$ is also coflat.
	\end{itemize}
\end{remark}
The following is an important result in comodule theory (due to local finiteness) that will be crucial in our work. For a proof, see 10.12 in \cite{kn:BW} for the particular case in which $R=\Bbbk$.
\begin{proposition}\label{p:injcoflat}\rm
	Let $C$ be a coalgebra. A $C$-comodule is injective if and only if it is coflat.
\end{proposition}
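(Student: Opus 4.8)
The plan is to let the \emph{cofree} comodules $M\otimes C$ (with coaction $\mathrm{id}_M\otimes\Delta$) mediate between the two notions. First I would record two elementary facts about them. (i) Each $M\otimes C$ is \emph{coflat}: choosing a basis of $M$ gives $M\otimes C\cong\bigoplus_i C$ as $C$-comodules, so $(M\otimes C)\otimes^C-\cong\bigoplus_i(C\otimes^C-)\cong M\otimes_\Bbbk-$, and tensoring over $\Bbbk$ by a fixed vector space is exact. (ii) Each $M\otimes C$ is \emph{injective}: the cofree--forgetful adjunction gives $\mathrm{Hom}^C(-,M\otimes C)\cong\mathrm{Hom}_\Bbbk(-,M)$, and $\mathrm{Hom}_\Bbbk(-,M)$ is exact on $Vec$. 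I would also note that for any comodule $(V,v)$ the coaction $v\colon V\to V\otimes C$ is a monomorphism \emph{in} $Vec^C$ --- it is a comodule morphism into $(V\otimes C,\mathrm{id}_V\otimes\Delta)$ precisely by coassociativity, and it is split by $\mathrm{id}_V\otimes\varepsilon$ in $Vec$ --- so every comodule embeds into a cofree (hence coflat and injective) comodule.

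Granting this, the implication \emph{injective $\Rightarrow$ coflat} is immediate: if $V$ is injective then the mono $v\colon V\hookrightarrow V\otimes C$ splits in $Vec^C$ by Remark~\ref{r:injective}(3), so $V$ is a direct summand of the coflat comodule $V\otimes C$ and is therefore coflat by Remark~\ref{r:coflat}.

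For the converse, \emph{coflat $\Rightarrow$ injective}, I would show directly that $\mathrm{Hom}^C(-,V)$ carries monomorphisms to epimorphisms. Two standard inputs are needed. The first is the \emph{fundamental theorem of comodules}: every element of a $C$-comodule lies in a finite-dimensional subcomodule, so $Vec^C$ is locally finite. The second is the classical duality isomorphism, natural in both arguments and valid whenever $N$ is finite-dimensional,
\[
\mathrm{Hom}^C(N,W)\;\cong\;N^*\otimes^C W ,
\]
where $N^*=\mathrm{Hom}_\Bbbk(N,\Bbbk)$ carries its dual-comodule structure (which exists because $N$ is finite-dimensional and $C$ is cocommutative). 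Given a mono $A\hookrightarrow B$ and a map $f\colon A\to V$, a Zorn's-lemma argument on partial extensions reduces the existence of an extension $B\to V$ to the finite-dimensional case: a maximal partial extension $(C_0,g_0)$ with $C_0\neq B$ can be enlarged by one element $b\in B\setminus C_0$, which generates a finite-dimensional subcomodule $D$, and the finite-dimensional case applied to $C_0\cap D\hookrightarrow D$ extends $g_0|_{C_0\cap D}$ over $D$, contradicting maximality. In the finite-dimensional case, dualizing turns $A\hookrightarrow B$ into an epimorphism $B^*\twoheadrightarrow A^*$ of finite-dimensional comodules, and under the duality isomorphism the restriction map $\mathrm{Hom}^C(B,V)\to\mathrm{Hom}^C(A,V)$ is identified with $B^*\otimes^C V\to A^*\otimes^C V$. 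Since $V$ is coflat, $V\otimes^C-$ is exact, hence so is the naturally isomorphic functor $-\otimes^C V$ (using that $\otimes^C$ is symmetric, Lemma~\ref{cotensor}); in particular it sends the epi $B^*\twoheadrightarrow A^*$ to an epi, giving the required surjectivity.

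The main obstacle is the finite-dimensional step: constructing the dual-comodule structure on $N^*$ and proving the duality isomorphism $\mathrm{Hom}^C(N,-)\cong N^*\otimes^C-$, together with the fundamental theorem of comodules on which the reduction rests. As these are standard results in comodule theory, in the paper we simply invoke \cite[10.12]{kn:BW} (for the case $R=\Bbbk$).
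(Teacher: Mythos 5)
Your argument is correct, and it supplies more than the paper does: the paper does not prove this proposition at all, but simply refers to 10.12 of \cite{kn:BW} (for the case $R=\Bbbk$). What you have written out is essentially the standard argument underlying that citation. The half \emph{injective $\Rightarrow$ coflat} is complete and self-contained given the remarks already in the paper: coassociativity makes the coaction $v$ a split monomorphism of comodules from $(V,v)$ into the cofree comodule $V\otimes C\cong\bigoplus_i C$, which is coflat by Remark~\ref{r:coflat}; injectivity then makes $V$ a direct summand of it, hence coflat. The converse correctly isolates the two genuine inputs, namely local finiteness of $Vec^C$ (the fundamental theorem of comodules) and the duality $\mathrm{Hom}^C(N,W)\cong N^*\otimes^C W$ for finite-dimensional $N$; the Zorn/Baer reduction to the finite-dimensional case is sound, the identification of the restriction map with $B^*\otimes^C V\to A^*\otimes^C V$ is the right use of naturality, and coflatness of $V$ together with the symmetry of $\otimes^C$ (Lemma~\ref{cotensor}) gives the required surjectivity. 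The one ingredient you defer --- the comodule structure on $N^*$ and the natural isomorphism $\mathrm{Hom}^C(N,-)\cong N^*\otimes^C-$ --- is precisely the technical core of the cited result, so your write-up is best read as an expansion of the citation rather than an alternative to it. One minor caveat: the proposition is stated for an arbitrary coalgebra, and for non-cocommutative $C$ one must keep track of sides ($N^*$ is then a left comodule and the relevant cotensor is taken accordingly); since the paper assumes cocommutativity throughout, your appeal to it is harmless in context.
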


\subsubsection{Cosemisimple coalgebras}\label{ss:coss}
\begin{definition}\rm
	A coalgebra is said to be {\em simple} if it does not have proper subcoalgebras (i.e. subcoalgebras other than $0$ or itself).\\ 
	A coalgebra is said to be {\em cosemisimple} if it is the direct sum of simple coalgebras.
\end{definition}
It is known (see for example \cite{kn:A}) that every comodule over a cosemisimple coalgebra is completely reducible. For general facts on abelian categories, this is equivalent to the fact that every short exact sequence splits and therefore every comodule is injective. So, for every comodule $W$ over a cosemisimple coalgebra, the functors  $Hom^C(-,W)$ and, using Proposition \ref{p:injcoflat}, $(-\otimes^C W)$ are exact.\\
These cosemisimple coalgebras are exactly the coalgebras for which the monoidal category $Vec^C$ is closed, as we will see in Lemma \ref{cotensor}.

The trivial coalgebra $\Bbbk$ with $\Delta (\lambda)=\lambda\otimes 1(=1\otimes \lambda), \varepsilon=id_\Bbbk$ is a cocommutative cosemisimple coalgebra. Indeed, it is obvious that $\Bbbk$ is cocommutative and, on the other hand, comodules over $\Bbbk$ are exactly $\Bbbk$-vector spaces ($Vec^\Bbbk=Vec$), which are always injective. \\
\ \\
We state now some important needed facts on cosemisimple coalgebras and their comodules. For a deeper approach and proofs we refer to \cite{kn:A}.\\
A coalgebra is said to be {\em simple} if it has no proper subcoalgebras. A comodule is said to be {\em simple} if it has no proper subcomodules. \\
Given a simple coalgebra $C$ there is only one simple $C$-comodule modulo isomorphisms. \\
A coalgebra version of Wedderburn's Theorem states that cosemisimple coalgebras are direct sum of simple ones and describes all simple coalgebras over a field $\Bbbk$. \\
In the particular case in which the coalgebra is cocommutative the description is simpler and it is easy to verify that the product of cocommutative cosemisimple coalgebras is also cosemisimple.

\section{A linear logic model} \label{s:lnl}
Intuitionistic Linear Logic (\cite{kn:G}) is a logic in which resources are controlled by an operator denoted with the simbol ``!" that expresses the notion of duplicability.\\
Translated to the model, this is captured by a monoidal comonad arising from a monoidal adjunction between a cartesian category and a symmetric monoidal closed category. There are many equivalent categorical descriptions of what should be such a model~\cite{kn:Me},~\cite{kn:Bi},~\cite{kn:S},~\cite{kn:Be}. The first categorical models of linear logic were given in \cite{kn:L} and \cite{kn:S}.  In this paper we follow Benton's ``Linear non linear category" definition.

\begin{definition}\rm
	A {\em LNL adjunction} consists of:
	\begin{itemize}
		\item a cartesian category $\mathcal{C}$,
		\item a closed symmetric monoidal category $\mathcal{M}$,
		\item a monoidal adjuntion $U\dashv R:\mathcal{C}\to \mathcal{M}$
	\end{itemize}
\end{definition}
\noindent
It is known that a LNL adjunction is a model for intuitionistic linear logic. We present here a family of examples of LNL adjunctions, constructed from cocommutative cosemisimple coalgebras over fields.\\
\ \\
Let $C$ be a cocommutative cosemisimple $\Bbbk$-coalgebra, where $\Bbbk$ is a field. We consider
\begin{itemize}
	\item the category $Coalg$ of $\Bbbk$-cocommutative coalgebras and morphisms of coalgebras;
	\item the category $Coalg C$ defined as follows:
	\begin{itemize}
		\item objects are morphisms of coalgebras with cocommutative codomain in $C$; we denote by $(\phi)$ the morphism of coalgebras $\phi:D\to C$ when it is thought as an object in $Coalg C$ (note that we ask $D$ to be cocommutative but not necessarily cosemisimple),
		\item if $\phi:D\to C$ and $\psi:E\to C$ are morphisms of coalgebras, morphisms $f:(\phi)\to (\psi)$ correspond to coalgebra morphisms $f:D\to E$ such that $\psi \circ f=\phi$;
	\end{itemize}
	\item the category $Vec^C$ of $C$-comodules;
	\item the functor $U^C: Coalg C \to Vec^C$ that takes the object $(\phi)$ with $\phi:D\rightarrow C$ to the comodule $(D,d)$ where $d:D\to D\otimes C$ is the coaction defined by $d=(id_D\otimes \phi) \circ \Delta_D$ ($U^C$ is defined on morphism in the obvious way). 
\end{itemize}
We will prove that $U^C$ admits a right adjoint $U^C\dashv R^C$ is an LNL adjunction.
\begin{lemma}\rm \label{cartesian}
	The category $Coalg$ of cocommutative coalgebras and morphisms of coalgebras is a cartesian category.
\end{lemma}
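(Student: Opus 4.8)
The plan is to verify directly that $Coalg$ has a terminal object and binary products, which by the characterisation recalled above is equivalent to admitting all finite products. The ground field $\Bbbk$, with the trivial coalgebra structure described in Subsection~\ref{ss:coss}, is terminal: for any cocommutative coalgebra $C$ the counit $\varepsilon_C\colon C\to\Bbbk$ is a morphism of coalgebras (one checks $\Delta_\Bbbk\circ\varepsilon_C=(\varepsilon_C\otimes\varepsilon_C)\circ\Delta_C$ using counitality of $C$, and $\varepsilon_\Bbbk\circ\varepsilon_C=\varepsilon_C$ trivially), and it is the only one, since any coalgebra morphism $f\colon C\to\Bbbk$ must satisfy $f=\varepsilon_\Bbbk\circ f=\varepsilon_C$.

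For binary products, given cocommutative coalgebras $C$ and $D$ I would put on the vector space $C\otimes D$ the comultiplication $\Delta_{C\otimes D}=(id_C\otimes\tau\otimes id_D)\circ(\Delta_C\otimes\Delta_D)$, i.e. $\Delta_{C\otimes D}(c\otimes d)=\sum (c_1\otimes d_1)\otimes(c_2\otimes d_2)$, and the counit $\varepsilon_{C\otimes D}(c\otimes d)=\varepsilon_C(c)\varepsilon_D(d)$. Coassociativity and counitality are straightforward diagram chases; cocommutativity of $C\otimes D$ follows at once from cocommutativity of $C$ and of $D$ by transposing the two tensor factors of $C\otimes D$ and using $\sum c_2\otimes c_1=\sum c_1\otimes c_2$ and the analogous identity in $D$, so $C\otimes D$ is again an object of $Coalg$. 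The projections are $p_C=(id_C\otimes\varepsilon_D)\colon C\otimes D\to C\otimes\Bbbk\cong C$ and $p_D=(\varepsilon_C\otimes id_D)\colon C\otimes D\to\Bbbk\otimes D\cong D$, which are morphisms of coalgebras by short computations with counitality. Given $f\colon E\to C$ and $g\colon E\to D$ in $Coalg$, the candidate mediating morphism is $\langle f,g\rangle=(f\otimes g)\circ\Delta_E$, so $\langle f,g\rangle(e)=\sum f(e_1)\otimes g(e_2)$; then $p_C\circ\langle f,g\rangle=f$ and $p_D\circ\langle f,g\rangle=g$ follow from counitality of $E$ together with the fact that $f$ and $g$ commute with counits. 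For uniqueness one checks $(p_C\otimes p_D)\circ\Delta_{C\otimes D}=id_{C\otimes D}$ and then, for any coalgebra morphism $h\colon E\to C\otimes D$ with $p_C\circ h=f$ and $p_D\circ h=g$, applies $p_C\otimes p_D$ to the identity $\Delta_{C\otimes D}\circ h=(h\otimes h)\circ\Delta_E$ to conclude $h=\langle f,g\rangle$.

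The only step that is not a routine chase is checking that $\langle f,g\rangle$ is a coalgebra morphism, and this is precisely where cocommutativity is essential. Expanding $\Delta_{C\otimes D}\circ\langle f,g\rangle$ and $(\langle f,g\rangle\otimes\langle f,g\rangle)\circ\Delta_E$ in Sweedler notation, using that $f$ and $g$ preserve comultiplications and using coassociativity, both sides are brought to the form $\sum f(e_1)\otimes g(e_2)\otimes f(e_3)\otimes g(e_4)$ only after one is allowed to transpose two of the four copies of $E$, i.e. one needs $\sum e_1\otimes e_2\otimes e_3\otimes e_4=\sum e_1\otimes e_3\otimes e_2\otimes e_4$, which holds because the iterated comultiplication of a cocommutative coalgebra is invariant under permutations of its tensor factors and fails for a general coalgebra. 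So the single technical fact worth isolating beforehand is this invariance of iterated comultiplication in the cocommutative case; everything else is of the same nature as the verification that $\otimes$ makes $Coalg$ a symmetric monoidal category, the extra content being only that cocommutativity promotes this tensor product to the categorical product.
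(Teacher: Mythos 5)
Your proof is correct and follows essentially the same route as the paper: the terminal object is the trivial coalgebra $\Bbbk$ and the binary product of $C$ and $D$ is the tensor product $C\otimes D$ with comultiplication $(id_C\otimes\tau\otimes id_D)\circ(\Delta_C\otimes\Delta_D)$, counit $\varepsilon_C\otimes\varepsilon_D$, and projections $id_C\otimes\varepsilon_D$, $\varepsilon_C\otimes id_D$. The paper merely exhibits this structure without verification, whereas you carry out the universal-property check and correctly isolate the one place where cocommutativity is indispensable, namely that the mediating map $(f\otimes g)\circ\Delta_E$ is a coalgebra morphism --- which is exactly the point the paper's subsequent remark attributes to $\Delta\colon D\to D\times D$ being a coalgebra morphism.
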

\begin{proof}
	Given two cocommutative coalgebras $(D_1, \Delta_1,\varepsilon_1)$ and $(D_2, \Delta_2, \varepsilon_2)$, the product coalgebra $D=D_1\times D_2$ is the cocommutative coalgebra defined on the vector space $D_1\otimes D_2$ by
	$$
	\begin{array}{l}
	\Delta(d_1\otimes d_2)=(id_{D_1} \otimes \tau \otimes id_{D_2}) (\Delta_1\otimes \Delta_2)(d_1\otimes d_2),\\
	\varepsilon (d_1\otimes d_2)=\varepsilon_1(d_1)\varepsilon_2(d_2),
	\end{array}
	$$
	where we denote by $\tau:D_1\otimes D_2 \to D_2\otimes  D_1$ the usual trasposition. \\
	Moreover, the trivial coalgebra $\Bbbk$ is a terminal object in the category of cocommutative coalgebras, so we have a cartesian structure on the category of cocommutative coalgebras.
\end{proof}
\begin{remark}\rm
	The operation $\times$ can be done for any two coalgebras, even if they are not cocommutative. More explicitely, if $C$ and $D$ are coalgebras, then $C\times D$ as defined above also is a coalgebra, but it will not be in general the cartesian product of $C$ and $D$. Now, if we ask $C$ and $D$ to be cocommutative, we get that $C\times D$ is the cartesian product of $C$ and $D$. This is based on the fact that for a cocommutative coalgebra $(D,\Delta,\varepsilon)$ the comultiplication $\Delta: D\to D\times D$ is a morphism of coalgebras. The corresponding projections are $p_D: \varepsilon_C \otimes id_D: C\otimes D \to D$ and $p_C:id_C \otimes \varepsilon_D: C\otimes D \to C$.
\end{remark}
\begin{lemma}\rm \label{coalgCcartesian}
	If $C$ is a cocommutative coalgebra, the category $Coalg C$ is a cartesian category.
\end{lemma}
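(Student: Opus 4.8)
The plan is to show that $CoalgC$ inherits its cartesian structure from that of $Coalg$ (Lemma~\ref{cartesian}) by computing the product of two objects $(\phi)$ and $(\psi)$ as a suitable pullback in $Coalg$, and exhibiting a terminal object.

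\textbf{Terminal object.} First I would check that $(\mathrm{id}_C):C\to C$ is terminal in $CoalgC$. Given any object $(\phi)$ with $\phi:D\to C$, a morphism $(\phi)\to(\mathrm{id}_C)$ is a coalgebra morphism $f:D\to C$ with $\mathrm{id}_C\circ f=\phi$, i.e. $f=\phi$; so there is exactly one such morphism, namely $\phi$ itself.

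\textbf{Binary products.} Given $(\phi)$ with $\phi:D\to C$ and $(\psi)$ with $\psi:E\to C$, I would form the product $D\times E$ in $Coalg$ (which exists by Lemma~\ref{cartesian}, carried by the vector space $D\otimes E$) together with its projections $\pi_D$ and $\pi_E$, and then take the equalizer in $Coalg$ of the two coalgebra morphisms $\phi\circ\pi_D,\ \psi\circ\pi_E : D\times E \to C$. Concretely this is the largest subcoalgebra $P\seq D\times E$ on which $\phi\circ\pi_D$ and $\psi\circ\pi_E$ agree; equivalently it is the pullback $D\times_C E$ in $Coalg$. The object $(\chi)$ of $CoalgC$ is then $\chi := \phi\circ\pi_D|_P = \psi\circ\pi_E|_P : P\to C$, with the evident projections to $(\phi)$ and $(\psi)$. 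The universal property is then purely formal: a cone over $(\phi),(\psi)$ from an object $(\theta)$, $\theta:F\to C$, is a pair of coalgebra morphisms $F\to D$, $F\to E$ commuting with $\phi,\psi,\theta$; these induce a unique $F\to D\times E$ in $Coalg$ by the product property, which factors (uniquely) through $P$ because its two composites to $C$ agree (both equal $\theta$), and this factorization is automatically a morphism in $CoalgC$.

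\textbf{The main obstacle.} The only genuine point requiring care is the existence of the equalizer/pullback inside the category $Coalg$ of cocommutative coalgebras: unlike $Vec$, a coalgebra is not determined by a kernel, and one must construct the equalizer of $\phi\circ\pi_D$ and $\psi\circ\pi_E$ as a subcoalgebra rather than as a linear subspace. The standard remedy is that $Coalg$ (over a field) is a cocomplete and complete category — limits can be built because every vector subspace of a coalgebra is contained in a largest subcoalgebra, so one takes the sum of all subcoalgebras of $D\times E$ on which the two maps to $C$ coincide. Once this equalizer is in hand, everything else is a routine diagram chase, and I would only spell out the construction of $P$ and the verification that the induced map from a competing cone lands in $P$, leaving the bookkeeping of coalgebra morphism axioms to the reader.
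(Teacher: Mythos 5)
Your proposal is correct and follows essentially the same route as the paper: both obtain binary products in $Coalg\,C$ as pullbacks over $C$, built from the product in $Coalg$ together with the equalizer given by the largest subcoalgebra on which the two composites to $C$ agree, and both take $(\mathrm{id}_C)$ as the terminal object. The only difference is cosmetic — you unfold the pullback explicitly as product-followed-by-equalizer and spell out the universal property, while the paper cites the general fact that products plus equalizers yield pullbacks.
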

\begin{proof}
	Note that $Coalg$ admits equalizers. Indeed, for a parallel pair $(f,g):D\to C$ it is enough to consider the largest subcoalgebra contained in $Ker (f-g)$.\\
	The existence of finite products (Lemma \ref{cartesian}) and equalizers in $Coalg$ guarantees the existence of pullbacks in this category, that induce a cartesian structure on $Coalg C$.\\
	To be explicit, we have that $(\phi_1)\times (\phi_2)=(\phi)$, where $\phi$ is defined by the following commutative diagram in $Coalg$, whose square is a pullback:
	$$
	\xymatrix
	{
		D\ar[rr]^-u \ar[dd]_-v \ar[rrdd]^\phi&& D_1\ar[dd]^-{\phi_1}\\
		\ \\
		D_2\ar[rr]_-{\phi_2}&&C.
	}
	$$
	where projection maps $\pi_1:(\phi_1)\times (\phi_2)\rightarrow (\phi_1)$ and $\pi_2:(\phi_1)\times (\phi_2)\rightarrow (\phi_2)$ are given by $\pi_1=u$ and $\pi_2=v$. Moreover, the terminal object is $(id_C)$.
	
\end{proof}
\begin{lemma}\rm \label{cotensor}
	If $C$ is a cocommutative coalgebra, the category $(Vec^C,\otimes^C,C)$
	is symmetric monoidal. Moreover, it is closed if and only if $C$ is cosemisimple.
\end{lemma}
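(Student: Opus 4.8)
The plan is to first equip $Vec^{C}$ with a symmetric monoidal structure whose tensor is $\otimes^{C}$ and whose unit is $C$, and then to derive the closedness clause from the coflat/injective dictionary of Proposition~\ref{p:injcoflat}. I would begin by checking that $\otimes^{C}$ is a well-defined bifunctor on $Vec^{C}$: a pair of comodule morphisms $f\colon V\to V'$, $g\colon W\to W'$ sends the defining subspace of $V\otimes^{C}W$ into that of $V'\otimes^{C}W'$, and the prescription $\delta(x\otimes y)=x\otimes w(y)$ indeed lands in $(V\otimes^{C}W)\otimes C$ and is coassociative and counital; here cocommutativity of $C$, in the form $\Delta=\tau\circ\Delta$, is what makes the latter work. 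For the right unitor I would take $\rho_{V}\colon V\otimes^{C}C\to V$ to be the restriction of $\mathrm{id}_{V}\otimes\varepsilon_{C}$, with inverse the coaction $v\colon V\to V\otimes C$; that $v(x)$ really lies in $V\otimes^{C}C$ is again exactly cocommutativity of $\Delta$, and $\rho_{V}\circ v=\mathrm{id}_{V}$ is counitality. The left unitor is obtained symmetrically. The symmetry $\sigma_{V,W}\colon V\otimes^{C}W\to W\otimes^{C}V$ is the restriction of the underlying transposition $\tau_{V,W}$; a short Sweedler computation shows it corestricts correctly and is a comodule map, and $\sigma^{2}=\mathrm{id}$ and naturality are inherited from $\tau$.

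The associator is the laborious point. Since $\Bbbk$ is a field, every functor $U\otimes(-)$ and $(-)\otimes U$ on $Vec$ is exact, hence preserves the equalizer defining a cotensor product; so $(V\otimes^{C}W)\otimes Z$ and $V\otimes(W\otimes^{C}Z)$ are honest subspaces of $V\otimes W\otimes Z$, each cut out by one ``equalizer condition'', and $(V\otimes^{C}W)\otimes^{C}Z$, resp.\ $V\otimes^{C}(W\otimes^{C}Z)$, is the intersection of that subspace with a second such condition, namely the one obtained by cotensoring with the remaining factor along the coaction $\delta$. Writing $v,w,z$ for the coactions of $V,W,Z$, I would then verify --- by a direct computation in Sweedler notation using coassociativity of $v,w,z$ together with cocommutativity of $C$ --- that both intersections coincide with
$$T=\bigl\{\,t\in V\otimes W\otimes Z\ \bigm|\ (v\otimes\mathrm{id}\otimes\mathrm{id})\,t=(\mathrm{id}\otimes\tau\otimes\mathrm{id})(\mathrm{id}\otimes w\otimes\mathrm{id})\,t\ \text{and}\ (\mathrm{id}\otimes w\otimes\mathrm{id})\,t=(\mathrm{id}\otimes\mathrm{id}\otimes\tau)(\mathrm{id}\otimes\mathrm{id}\otimes z)\,t\,\bigr\},$$
which produces a natural isomorphism $\alpha$. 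This reconciliation of the two iterated-equalizer descriptions is the step I expect to be the main obstacle, though it is purely mechanical. With $\alpha,\lambda,\rho,\sigma$ in hand, the pentagon, triangle and hexagon may be checked after applying the faithful forgetful functor $Vec^{C}\to Vec$: there all four morphisms become (co)restrictions of composites of the structure maps of $(Vec,\otimes,\Bbbk)$ with coactions and counits, and the identities follow from coherence in $Vec$ and the comodule axioms.

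It remains to prove that $Vec^{C}$ is closed if and only if $C$ is cosemisimple. If $C$ is cosemisimple, then every $C$-comodule is injective (Section~\ref{ss:coss}), hence coflat by Proposition~\ref{p:injcoflat}, so $(-)\otimes^{C}W$ is exact for every $W$; being exact and preserving coproducts (Section~\ref{ss:homyprod}) it preserves all small colimits, and since $Vec^{C}$ is a Grothendieck category (cocomplete, with a generator, exact filtered colimits) the special adjoint functor theorem yields a right adjoint $[W,-]$; together with the symmetric monoidal structure above (and $(-)\otimes^{C}W\cong W\otimes^{C}(-)$) this makes $Vec^{C}$ closed --- alternatively one can write $[W,-]$ out from the isotypic decomposition of comodules over a cosemisimple coalgebra, but the abstract argument is shorter. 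Conversely, if $Vec^{C}$ is closed then each $(-)\otimes^{C}W$ has a right adjoint, hence preserves colimits, hence is right exact; with its left exactness (Remark~\ref{r:coflat}) it is then exact, so $W$ is coflat, hence injective by Proposition~\ref{p:injcoflat}. As $W$ was arbitrary, every $C$-comodule is injective, so every monomorphism in $Vec^{C}$ splits (Remark~\ref{r:injective}), every short exact sequence splits, and $Vec^{C}$ is semisimple; by the characterisation recalled in Section~\ref{ss:coss} this forces $C$ to be cosemisimple.
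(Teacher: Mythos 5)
Your proof is correct and follows essentially the same route as the paper's: the symmetric monoidal structure is built from the equalizer description of $\otimes^C$ (you spell out the associator explicitly via the common subspace $T$ of $V\otimes W\otimes Z$, a verification the paper delegates to Doi's reference, and you use flatness over the field where the paper uses preservation of coreflexive equalizers by $-\otimes C$), and the closedness equivalence is derived exactly as in the paper from the coflat--injective dictionary of Proposition~\ref{p:injcoflat} together with the Special Adjoint Functor Theorem and its converse via existence of a right adjoint.
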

\begin{proof}
	We already know the product $\otimes^C$ in $Vec^C$ (see~\ref{ss:homyprod}).  We can give a categorical characterization of the coaction associated with the product. For that, given $C$-comodules $(V, v)$ and $(W,w)$, if we note by $(V\otimes^C W,\rho_{V\otimes^C W})$ for the product $(V,v)\otimes^C (W,w))$, we get the following commutative diagram
	\begin{equation}\label{eq:splitfork}
	\xymatrix@C+5pt{
		V\otimes^C W \ar[r]^e   \ar[d]_{\rho_{V\otimes^C W}} &		V\otimes W \ar@<0.5ex>[rr]^-{id_V\otimes \tau w} \ar@<-0.5ex>[rr]_-{v\otimes id_W}\ar[d]_-{id_V\otimes w}&& V\otimes C \otimes W \ar[d]^-{id_V \otimes id_C \otimes w}\\
		(V\otimes^C W)\otimes C\ar[r]^{e\otimes id_C}&	V\otimes W \otimes C \ar@<0.5ex>[rr]^-{id_V\otimes \tau w\otimes id_C} \ar@<-0.5ex>[rr]_-{v\otimes id_W \otimes id_C} &&V\otimes C \otimes W\otimes C,
	}
	\end{equation}
	where both rows are equalizers. Indeed, using that the parallel pair in the first row is coreflexive via $id_V\otimes \varepsilon \otimes id_W$ (i.e. this last morphism is a common retraction for the morphisms of the parallel pair) and the fact that the functor $\_ \otimes C: Vec \to Vec$ preserves equalizers of coreflexive pairs, we obtain that the equalizer of the second row is $e\otimes id_C$.\\
	As the two involved squares commute, we get by universality property an induced map
	$$
	\rho_{V\otimes^C W}: V\otimes^C W \to V\otimes^C W \otimes C
	$$
	that defines a $C$-comodule structure on $V\otimes^C W$.  Associativity follows easily from associativity of $\otimes$ and coassociativity of $\Delta$. 
	(See \cite{kn:D} for associativity of $\otimes^C$). \\ It is also easy to check that $C$ is a unit.
	
The usual $\tau: V\otimes W \to W\otimes V$, $\tau(v\otimes w)=w\otimes v$ induces an isomorphisms between the equalizers $V\otimes ^C W$ and $W\otimes^C V$.
	\ \\
	Now, assume that $C$ is cosemisimple. This means that for every $C$-comodule $X$, the functor $(- \otimes^C X): Vec^C\to Vec^C$ is (left and) right exact. Also, $(-\otimes^C X)$ preserves all colimits (since it is an endofunctor on an abelian category preserving epimorphisms and coproducts), so, using the Special Adjoint Functor Theorem, we have that it has a right adjoint $hom^C (X,-)$ making $(Vec^C, \otimes^C, C, hom^C)$ a closed monoidal category.\\
	Conversely, if $Vec^C$ is monoidal closed, we have that for each $C$-comodule $V$, the functor $(V\otimes^C-)$ has a right adjoint, i.e., that $V$ is coflat, or equivalently, injective. Then, $C$ is cosemisimple.
\end{proof}
\noindent
We consider the forgetful functor $U: Coalg \to Vec$ taking each coalgebra to its underlying vector space.
Note that $U$ is an instance of $U^C$. Indeed, the functor $U^\Bbbk$ associated to the trivial coalgebra $\Bbbk$ is precisely $U$.
\begin{lemma}\rm \label{eq}\rm
	The functor $U:Coalg\to Vec$ preserves equalizers of coreflexive pairs.
\end{lemma}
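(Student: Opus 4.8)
The plan is to identify the equalizer of a coreflexive pair explicitly and check that its underlying vector space is the naive kernel. Let $f,g:C\to D$ be a coreflexive pair in $Coalg$, with a common retraction $r:D\to C$ in $Coalg$ (only the linearity of $r$ will matter), so that $rf=rg=id_C$. Write $K=\ker(f-g)\subseteq C$, the equalizer of $f$ and $g$ computed in $Vec$, with inclusion $k:K\hookrightarrow C$. I will show that $\Delta_C$ and $\varepsilon_C$ restrict to $K$, making $(K,k)$ a subcoalgebra; then that $(K,k)$ with this structure is the equalizer of $(f,g)$ in $Coalg$. Since $U$ merely forgets structure, $U$ applied to this equalizer returns $K=\ker(f-g)$ with its inclusion, which is the equalizer of $Uf,Ug$ in $Vec$, proving the claim.

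The main step is that $\Delta_C(K)\subseteq K\otimes K$ inside $C\otimes C$. For $c\in K$ we have $f(c)=g(c)$, so, since $f$ and $g$ are coalgebra morphisms, $(f\otimes f)(\Delta_C c)=\Delta_D(fc)=\Delta_D(gc)=(g\otimes g)(\Delta_C c)$. Applying $id_D\otimes r$ to both sides and using $rf=rg=id_C$ gives $(f\otimes id_C)(\Delta_C c)=(g\otimes id_C)(\Delta_C c)$, i.e. $\bigl((f-g)\otimes id_C\bigr)(\Delta_C c)=0$; applying $r\otimes id_D$ instead yields symmetrically $\bigl(id_C\otimes(f-g)\bigr)(\Delta_C c)=0$. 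Since tensoring over a field is exact (equivalently, $\_\otimes C$ and $C\otimes\_$ preserve these kernels, as used in the proof of Lemma \ref{cotensor}), the first identity says $\Delta_C c\in K\otimes C$ and the second $\Delta_C c\in C\otimes K$, all viewed as subspaces of $C\otimes C$. Choosing a linear complement $C=K\oplus W$ shows $(K\otimes C)\cap(C\otimes K)=K\otimes K$, so $\Delta_C c\in K\otimes K$. Thus $\Delta_C$ restricts to $\Delta_K:K\to K\otimes K$ and $\varepsilon_C$ restricts to $\varepsilon_K:=\varepsilon_C k$; coassociativity, counitality and cocommutativity are inherited from $C$, so $(K,\Delta_K,\varepsilon_K)$ is a cocommutative subcoalgebra and $k$ is a coalgebra morphism with $fk=gk$.

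To finish, I check that $(K,k)$ is the equalizer of $(f,g)$ in $Coalg$. If $h:E\to C$ is a coalgebra morphism with $fh=gh$, then as a linear map $h$ factors uniquely as $h=k\bar h$ with $\bar h:E\to K$ linear, and it remains to see $\bar h$ is a coalgebra morphism. As $k\otimes k$ is injective and $(k\otimes k)\Delta_K\bar h=\Delta_C k\bar h=\Delta_C h=(h\otimes h)\Delta_E=(k\otimes k)(\bar h\otimes\bar h)\Delta_E$, we get $\Delta_K\bar h=(\bar h\otimes\bar h)\Delta_E$, and likewise $\varepsilon_K\bar h=\varepsilon_E$; uniqueness is immediate since $k$ is monic. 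Hence $(K,k)$ is the $Coalg$-equalizer, and $U(K,k)=\ker(f-g)$ with its inclusion is the $Vec$-equalizer, as required.

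I expect the only real content to be the inclusion $\Delta_C(K)\subseteq K\otimes K$, and coreflexivity is exactly what makes it go through: it lets one collapse the identity $(f\otimes f)\Delta_C c=(g\otimes g)\Delta_C c$ to the one-sided identities $\bigl((f-g)\otimes id\bigr)\Delta_C c=0$ and $\bigl(id\otimes(f-g)\bigr)\Delta_C c=0$ by post-composing with the retraction in one tensor slot. Without a retraction there is no reason for $\ker(f-g)$ to be closed under $\Delta_C$ (one only obtains the largest subcoalgebra it contains, as recalled in the proof of Lemma \ref{coalgCcartesian}), so $U$ would not preserve the equalizer in general. The remaining ingredients — the subspace identity $(K\otimes C)\cap(C\otimes K)=K\otimes K$, exactness of tensoring over a field, and the faithfulness-style verification that $\bar h$ is a coalgebra map — are routine.
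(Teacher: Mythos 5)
Your proof is correct and follows essentially the same route as the paper: coreflexivity lets you apply the retraction $r$ in one tensor factor of $(f\otimes f)\Delta_C=(g\otimes g)\Delta_C$ to conclude $\Delta_C(K)\subseteq (K\otimes C)\cap(C\otimes K)=K\otimes K$, so $\ker(f-g)$ is a subcoalgebra and hence already the equalizer in $Coalg$. Your version is in fact slightly tidier than the paper's (which argues in Sweedler notation ``assuming the $x_2$'s are linearly independent''), since you invoke exactness of $-\otimes C$ over a field and spell out the universal-property check that the paper leaves implicit.
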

\begin{proof}
	Let $f,g:C\to D$ be a coreflexive pair in $Coalg$ with equalizer $e:X\to C$. \\
	The equalizer in $Vec$ of $f$ and $g$ is the vector space $Ker(f-g)$. This is not in general the underlying vector space of the equalizer of $f$ and $g$ in $Coalg$. But, as $f,g$ have a common retraction $r$, it can be proved that $K:=Ker(f-g)$ is in fact a subcoalgebra of $C$, so the equalizer in $Coalg$ is $Ker(f-g)$ with its inclusion in $C$.\\
	Indeed, take $x\in Ker(f-g)$, that is $f(x)=g(x)$ and so $\Delta(f(x))=\Delta(g(x))$. As $f$ and $g$ are morphisms of coalgebras, we obtain
	$$
	\sum f(x_1)\otimes f(x_2)=\sum g(x_1)\otimes g(x_2).
	$$
	Applying $id \otimes r$ to the last equality and assuming the $x_2$'s are linearly independent, we get the equalities $f(x_1)=g(x_1)$, obtaining that $\sum x_1\otimes x_2 \in K\otimes C$. Similarly, we can prove that $\sum x_1\otimes x_2 \in C \otimes K$. We conclude that $\Delta(x)\otimes (K\otimes C) \cap (C \otimes K)= K\otimes K$.
\end{proof}
\begin{lemma}\rm \label{strong}
	The functor $U^C: Coalg C \to Vec^C$ is strong monoidal.
\end{lemma}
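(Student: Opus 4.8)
The plan is to show that $U^C$ is in fact \emph{strict} monoidal: with the canonical choices of products in $Coalg C$ (Lemma~\ref{coalgCcartesian}) and of the cotensor in $Vec^C$ (Lemma~\ref{cotensor}), both coherence maps $m_I$ and $m_{(\phi_1),(\phi_2)}$ will turn out to be identities, after which the monoidal-functor axioms hold automatically since all the structural isomorphisms on the two sides are inherited from $(Vec,\otimes)$. The unit is immediate: $U^C(id_C)$ is the comodule $(C,(id_C\otimes id_C)\circ\Delta_C)=(C,\Delta_C)$, which is exactly the monoidal unit $C$ of $Vec^C$, so we take $m_I=id_C$.

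For binary products, fix coalgebra morphisms $\phi_i\colon D_i\to C$ ($i=1,2$). By Lemma~\ref{coalgCcartesian}, $(\phi_1)\times(\phi_2)=(\phi)$ is realized by the pullback $D=D_1\times_C D_2$ in $Coalg$, which by that construction is the largest subcoalgebra of the product coalgebra $D_1\times D_2$ contained in $\ker(\phi_1 p_1-\phi_2 p_2)$, with $\phi$ the evident induced map. I claim that, as a subspace of $D_1\otimes D_2$, this $D$ coincides with the cotensor product $D_1\otimes^C D_2=U^C(\phi_1)\otimes^C U^C(\phi_2)$, and that under this identification the coaction $(id_D\otimes\phi)\circ\Delta_D$ of $U^C(\phi)$ agrees with the cotensor coaction $(id_{D_1}\otimes d_2)|_D$; granting this, $m_{(\phi_1),(\phi_2)}$ may be taken to be the identity.

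To prove the claim I would argue in four steps. \textbf{(i)} The pair $d_1\otimes id_{D_2},\ id_{D_1}\otimes\tau d_2\colon D_1\times D_2\rightrightarrows D_1\times C\times D_2$ is a \emph{coreflexive} pair in $Coalg$: each map is a coalgebra morphism because $\Delta_{D_i}\colon D_i\to D_i\times D_i$ is one (cocommutativity of $D_i$, cf.\ the Remark following Lemma~\ref{cartesian}) and products and transpositions of coalgebra morphisms are again such, and $id_{D_1}\otimes\varepsilon_C\otimes id_{D_2}$ is a common retraction by the counit axioms for $d_1,d_2$; hence, by (the proof of) Lemma~\ref{eq}, the $Vec$-equalizer of this pair --- which is by definition $D_1\otimes^C D_2$ --- is a subcoalgebra of $D_1\times D_2$. \textbf{(ii)} Applying $\varepsilon_{D_1}\otimes id_C\otimes\varepsilon_{D_2}$ to the defining equation $d_1(x)\otimes y=x\otimes\tau\bigl(d_2(y)\bigr)$ and using counitality yields $\phi_1(p_1(z))=\phi_2(p_2(z))$ for every $z\in D_1\otimes^C D_2$; combined with (i) this gives $D_1\otimes^C D_2\subseteq D$. \textbf{(iii)} Conversely, for $z\in D$ apply $\Delta_{D_1\times D_2}$ (the result stays in $D\otimes D$ since $D$ is a subcoalgebra), then apply $id\otimes\phi_1 p_1$ and $id\otimes\phi_2 p_2$, which coincide on $D\otimes D$; simplifying both sides with the counit axioms and the symmetry of $\otimes$ recovers precisely $d_1(x)\otimes y=x\otimes\tau\bigl(d_2(y)\bigr)$, so $z\in D_1\otimes^C D_2$, whence $D\subseteq D_1\otimes^C D_2$. \textbf{(iv)} Finally, using $\phi|_D=(\varepsilon_{D_1}\otimes\phi_2)|_D$, a short Sweedler computation with the counit axioms identifies $(id_D\otimes\phi)\circ\Delta_D$ with $(id_{D_1}\otimes d_2)|_D$, the cotensor coaction.

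I expect step \textbf{(iii)} to be the main obstacle, as it is where one must genuinely use the ``largest subcoalgebra'' clause of the pullback: comultiplying keeps an element of $D$ inside $D\otimes D$, where the two composites agree, and the Sweedler bookkeeping then has to be unwound carefully. (An alternative to (ii)--(iii) is to check directly that $D_1\otimes^C D_2$, equipped with the coalgebra structure from (i) and the morphism $\varepsilon_{D_1}\otimes\phi_2=\phi_1\otimes\varepsilon_{D_2}$ to $C$, satisfies the universal property of the pullback; this replaces the subcoalgebra argument by a routine verification.) Note the whole argument uses only cocommutativity of $C$ (and of the $D_i$), not cosemisimplicity; with $m_I$ and $m_{(\phi_1),(\phi_2)}$ identities, naturality of $m$ is clear from the constructions and the strong (indeed strict) monoidal axioms follow formally.
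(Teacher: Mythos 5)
Your proof is correct, and it rests on the same two pillars as the paper's: the unit computation $U^C((id_C))=(C,\Delta_C)$, and the identification of the pullback $D=D_1\times_C D_2$ with the cotensor product $D_1\otimes^C D_2$ inside $D_1\otimes D_2$, with Lemma~\ref{eq} and the coreflexivity retraction $id_{D_1}\otimes\varepsilon_C\otimes id_{D_2}$ doing the real work in both. The difference lies in how that identification is established. The paper shows that $(D,(u\otimes v)\circ\Delta)$ satisfies the universal property of the equalizer in $Coalg$ of the cotensor-defining parallel pair (it equalizes because $\phi_1u=\phi_2v$, and any equalizing coalgebra map factors through it via the universal property of the pullback), and then invokes Lemma~\ref{eq} once to conclude that it is also the equalizer in $Vec$. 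You instead realize both objects concretely as subcoalgebras of the product coalgebra $D_1\times D_2$ and prove equality by double inclusion: Lemma~\ref{eq} applied to your coreflexive pair makes $D_1\otimes^C D_2$ a subcoalgebra, counitality places it inside $\ker(\phi_1p_1-\phi_2p_2)$, so maximality of the pullback gives one inclusion, and comultiplying an element of $D$ (which stays in $D\otimes D$, where the two composites agree) gives the other. Your route leans on the explicit ``largest subcoalgebra'' description of equalizers in $Coalg$ from Lemma~\ref{coalgCcartesian} plus Sweedler bookkeeping, and buys a completely element-level, self-contained verification (including the observation that the coherence maps can be taken to be identities); the paper's route is shorter because the universal property of the pullback absorbs the computation in your step (iii). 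One small correction of emphasis: the ``largest'' clause is what you need in step (ii) to get $D_1\otimes^C D_2\subseteq D$, whereas step (iii) only uses that $D$ is a subcoalgebra contained in the kernel. Your closing remark is also accurate: as in the paper, only cocommutativity is used here, not cosemisimplicity.
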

\begin{proof}
	It is clear that $U^C((id_C))=(C,\Delta_C)$, so $U^C$ preserves the units. We will prove now that $U^C((\phi)\times (\psi))=U^C(\phi) \otimes^C U^C(\psi)$. \\
	Take $\phi_1:(D_1, \Delta_1, \varepsilon_1) \to (C, \Delta_C, \varepsilon_C), \phi_2:(D_2, \Delta_2, \varepsilon_2)\to (C, \Delta_C, \varepsilon_C)$ be morphisms of coalgebras. We recall the pullback diagram defining the product $(\phi)=(\phi_1)\times (\phi_2)$:
	$$
	\xymatrix
	{
		D\ar[rr]^-u \ar[dd]_-v \ar[ddrr]^\phi && D_1\ar[dd]^-{\phi_1}\\
		\ \\
		D_2\ar[rr]_-{\phi_2}&&C.
	}
	$$
	Now, let $d=(id_D\otimes \phi)\circ \Delta$, $d_1=(id_{D_1}\otimes \phi_1)\circ \Delta_1$, $d_2=(id_{D_2}\otimes \phi_2)\circ \Delta_2$. \\
	Note that $U^C(D_i)=(D_i,d_i)$ for $i=1,2$ and $U^C(D)=(D,d)$.\\
	We will prove that $(D,d)=(D_1,d_1)\otimes^C (D_2,d_2)$, in other words that $D$-with a suitable morphism $d$- is the equalizer in $Vec$ of the following parallel pair and that $d$ is effectively $\rho_{D_1\otimes^C D_2}$ (with the notation of Lemma \ref{cotensor}). Consider 
	$$
	\xymatrix
	{
		D_1\otimes D_2  \ar@<-0.5ex>[rrrr]_-{(id_{D_1}\otimes \phi_1 \otimes id_{D_2})\circ (\Delta_1 \otimes id_{D_2})} \ar@<0.5ex>[rrrr]^-{(id_{D_1}\otimes \phi_2\otimes id_{D_2})\circ (id_{D_1}\otimes \Delta_2)}&&&&D_1\otimes C \otimes D_2.
	}
	$$
	First observe that the parallel pair above can be thought in $Coalg$. We will prove first that the coalgebra $D$-with the morphism of coalgebras $(u\otimes v)\circ \Delta: D \to D_1\otimes D_2$ is the equalizer in $Coalg$.  \\
	Indeed,
	$$
	\begin{array}{ll}
	(id_{D_1}\otimes \phi_1 \otimes id_{D_2})\circ (\Delta_1 \otimes id_{D_2})\circ (u\otimes v)\circ \Delta=\\
	\ \ \ \ = (id_{D_1}\otimes \phi_1 \otimes id_{D_2})\circ (u\otimes u \otimes v)\circ (\Delta \otimes id_D)\circ \Delta &\mbox{ since $u$ is a morphism of coalgebras}\\
	\ \ \ \ = (u\otimes (\phi_1 \circ u) \otimes v)\circ (\Delta \otimes id_D)\circ \Delta & \mbox{ by coassociativity of $\Delta$}.
	\end{array}
	$$
	and similarly
	$$
	\begin{array}{lr}
	(id_{D_1}\otimes \phi_2 \otimes id_{D_2})\circ (id_{D_1}\otimes \Delta_2 )\circ (u\otimes v)\circ \Delta=\\
\ \ \ \ \ = (id_{D_1}\otimes \phi_2 \otimes id_{D_2})\circ (u\otimes v \otimes v)\circ (id_D \otimes \Delta)\circ \Delta \\
\ \ \ \ \ = (u\otimes (\phi_2 \circ v) \otimes v)\circ (\Delta \otimes id_D)\circ \Delta
	\end{array},
	$$
	and, since $\phi_1 \circ u=\phi_2 \circ v$, then the morphisms above are equal.\\
	Now, assume we have a morphism of coalgebras $d':D'\to D_1\otimes D_2$ equalizing the parallel pair. If we consider $u'=p_1\circ d', v'=p_2\circ d'$, where $p_1$ and $p_2$ are the canonical projections, it is easy to see that $\phi_1 \circ u'=\phi_2 \circ v'$ and therefore, by universality of the pullback, there is a morphism of coalgebras $h:D'\to D$ such that $uh=u', vh=v'$. We then have
	$$
	\begin{array}{lll}
	(u\otimes v) \Delta  h&=(uh \otimes vh) \Delta' &\mbox{ since $h$ is a morphisms of coalgebras}\\ & =(u'\otimes v') \Delta &\mbox{ by definition of $h$}\\ &=(p_1 d' \otimes p_2 d')\Delta'& \mbox{ by definition of $u',v'$}\\ &=(p_1\otimes p_2) (id_{D_1}\otimes \tau \otimes id_{D_2}) (\Delta_1\otimes \Delta_2)  d'&\mbox{ since $d'$ is a morphism of coalgebras}\\
	&= d'.
	\end{array}
	$$
	\ \\
	This proves that $(D,(u\otimes v) \Delta)$ is the equalizer in $Coalg$ of the parallel pair above. Now, by Lemma \ref{eq}, we have that $U$ preserves equalizers of the coreflexive pairs. Then, we have that $(D,(u\otimes v) \Delta)$ is the equalizer in $Vec$ of the parallel pair above. (Note that the pair is coreflexive for $id_{D_1}\otimes \varepsilon_C \otimes {id_{D_2}}$ is a common retraction in $Coalg$.)\\
	\ \\
	It is easy to prove that $d$ is the desired coaction, i.e., that the following diagram commutes:
	$$
	\xymatrix
	{
		D \ar[rrr]^-{(u\otimes v) \circ \Delta} \ar[dd]_-d&&& D_1\otimes D_2 \ar[dd]^-{id_{D_1}\otimes d_2}\\
		\\
		D\otimes C \ar[rrr]^{\left ((u\otimes v) \circ \Delta\right )\otimes id_C}&&& D_1\otimes D_2 \otimes C
	}
	$$
\end{proof}

\begin{lemma}\rm \label{leftadjoint}
	The functor $U^C$ admits a right adjoint $R^C:Vec^C\to Coalg C$.
\end{lemma}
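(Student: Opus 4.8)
The plan is to construct $R^C$ explicitly, using as the only non-formal input the classical \emph{cofree cocommutative coalgebra} functor, and then to reuse the ``largest subcoalgebra inside a given subspace'' device that already appears in the proofs of Lemmas~\ref{coalgCcartesian} and~\ref{eq}. Recall that the forgetful functor $Coalg\to Vec$ has a right adjoint $\Gamma:Vec\to Coalg$ (classical; see e.g.\ \cite{kn:A}); write $\pi_V:\Gamma V\to V$ for the couniversal linear map, so that every linear map $f:E\to V$ out of a cocommutative coalgebra $E$ factors uniquely as $f=\pi_V\circ\tilde f$ for a coalgebra morphism $\tilde f:E\to\Gamma V$.

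Given a $C$-comodule $(V,v)$, I would form the product $D_0:=\Gamma V\times C$ in $Coalg$; since $\Gamma V$ and $C$ are cocommutative this is the genuine categorical product, with coalgebra projections $q:D_0\to\Gamma V$ and $p:D_0\to C$ (the remark following Lemma~\ref{cartesian}). Set $\varphi_0:=p:D_0\to C$ (a coalgebra morphism) and $\epsilon_0:=\pi_V\circ q:D_0\to V$ (a linear map), and consider the two linear maps $a:=(\epsilon_0\otimes\varphi_0)\circ\Delta_{D_0}$ and $b:=v\circ\epsilon_0$ from $D_0$ to $V\otimes C$. Let $D\subseteq D_0$ be the largest subcoalgebra contained in $Ker(a-b)$; this exists because a sum of subcoalgebras is again a subcoalgebra, exactly as in Lemma~\ref{coalgCcartesian}. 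As a subcoalgebra of the cocommutative coalgebra $D_0$, $D$ is cocommutative, so $\varphi:=\varphi_0|_D:D\to C$ is an object of $Coalg C$; I define $R^C(V,v):=(\varphi)$, and write $\epsilon:=\epsilon_0|_D:D\to V$. Restricting the identity $a=b$ to $D$ gives $(\epsilon\otimes\varphi)\circ\Delta_D=v\circ\epsilon$, which says exactly that $\epsilon$ is a morphism of $C$-comodules $U^C(\varphi)\to(V,v)$; this will be the counit at $(V,v)$.

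Then I would verify that $(\varphi,\epsilon)$ is couniversal from $U^C$ to $(V,v)$. Let $(\psi:E\to C)$ be an object of $Coalg C$ and let $f:U^C(\psi)\to(V,v)$ be a morphism of $C$-comodules, i.e.\ a linear map $f:E\to V$ with $(f\otimes\psi)\circ\Delta_E=v\circ f$. Cofreeness of $\Gamma V$ yields a unique coalgebra morphism $\tilde f:E\to\Gamma V$ with $\pi_V\circ\tilde f=f$, and the universal property of the product then yields a unique coalgebra morphism $g_0:E\to D_0$ with $q\circ g_0=\tilde f$ and $p\circ g_0=\psi$; note $\epsilon_0\circ g_0=f$ and $\varphi_0\circ g_0=\psi$. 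Since $g_0$ is a coalgebra morphism, $a\circ g_0=(\epsilon_0 g_0\otimes\varphi_0 g_0)\circ\Delta_E=(f\otimes\psi)\circ\Delta_E=v\circ f=b\circ g_0$, so the subcoalgebra $g_0(E)$ of $D_0$ lies inside $Ker(a-b)$, hence inside $D$. Thus $g_0$ corestricts to a coalgebra morphism $g:E\to D$ with $\varphi\circ g=\psi$ (so $g$ is a morphism of $Coalg C$) and $\epsilon\circ g=f$. Uniqueness of $g$ would follow by composing any competing solution with the inclusion $D\hookrightarrow D_0$ and invoking the uniqueness clauses in the cofree and in the product universal properties. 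Hence every object of $Vec^C$ has a couniversal arrow from $U^C$, and these assemble into a functor $R^C:Vec^C\to Coalg C$ with $U^C\dashv R^C$.

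The only genuinely non-formal ingredient is the input from coalgebra theory: the existence of the cofree cocommutative coalgebra $\Gamma$, and the fact (which uses cocommutativity) that $\Gamma V\times C$ is the categorical product in $Coalg$; everything else is routine diagram chasing, parallel to the equalizer arguments in Lemmas~\ref{eq} and~\ref{coalgCcartesian}. If one prefers to avoid $\Gamma$ altogether, a more abstract route is available: $Coalg C$ is locally presentable, and $U^C$ preserves all colimits (colimits in $Coalg$, hence in $Coalg C$, as well as in $Vec^C$, are created by the respective forgetful functors to $Vec$), so a right adjoint exists by the adjoint functor theorem for locally presentable categories — though this yields no concrete description of $R^C$.
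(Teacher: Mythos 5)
Your proof is correct, but it takes a genuinely different route from the paper. The paper's proof of Lemma~\ref{leftadjoint} is purely abstract: it invokes the Special Adjoint Functor Theorem, checking that $Coalg\, C$ is locally small, cocomplete (colimits created from $Vec$), cowell-powered and has a generating set supplied by the Fundamental Theorem of Coalgebras (local finiteness), and that $U^C$ preserves colimits. You instead build $R^C$ explicitly: $R^C(V,v)$ is the largest subcoalgebra of $\Gamma V\times C$ contained in $Ker(a-b)$, with the couniversal arrow given by restricting $\pi_V\circ q$. Your verification of the couniversal property is sound --- the key points (the image of a coalgebra morphism is a subcoalgebra, hence $g_0(E)\subseteq D$ by maximality; uniqueness via the injective inclusion $D\hookrightarrow D_0$ together with the uniqueness clauses of the cofree object and of the product) all check out, and the cocommutativity hypotheses are used exactly where needed (so that $\Gamma V\times C$ is a genuine product, as in the remark after Lemma~\ref{cartesian}). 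What each approach buys: the paper's argument is shorter and self-contained given SAFT, but yields no description of $R^C$; yours produces a concrete cofree-comodule construction (essentially the one in \cite{kn:GP}), at the cost of importing the existence of the cofree cocommutative coalgebra $\Gamma$ --- which is classical but is itself usually established by an adjoint functor theorem or by an explicit construction as in \cite{kn:BL}, which would be a more appropriate reference here than \cite{kn:A}. Note also that your closing ``more abstract route'' via local presentability is, in substance, exactly the argument the paper gives.
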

\begin{proof}
	We use the Special Adjoint Functor Theorem.\\
	We first show that $Coalg C$ is in the hypothesis of the Theorem. It is clear that $Coalg C$ is locally small. Moreover, colimits in $Coalg C$ are easily created by the colimits of the underlying coalgebras in $C$, which are created by the colimits of its underlying vector spaces in $Vec$. As $Vec$ is cocomplete, we get that $Coalg C$. \\
	Now, by the Fundamental Theorem of Coalgebras, we have that taking all finite dimensional cocommutative coalgebras we get a generating set of $Coalg$. It is easy to induce from this set, a generating set of $Coalg C$.\\
	Now, as $Coalg C$ is cowell-powered, $Vec^C$ is locally small and $U^C$ preserves colimits, we get that $U^C$ has a right adjoint that we call $R^C$.
\end{proof}
From the results above, we deduce the following
\begin{proposition} \rm If $C$ is a cocommutative cosemisimple coalgebra, then
	$$U^C \dashv R^C: Coalg C\rightleftarrows Vec^C$$
	is an LNL adjunction.
\end{proposition}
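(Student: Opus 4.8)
The plan is to verify the three ingredients of an LNL adjunction---a cartesian category, a closed symmetric monoidal category, and a monoidal adjunction between them---each of which has already been isolated in one of the preceding lemmas, and then to glue them together using Kelly's criterion (Proposition~\ref{Kelly}). Concretely I would take $\mathcal{C}=Coalg C$, $\mathcal{M}=Vec^C$, and the adjunction $U^C\dashv R^C$.

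For the cartesian category: $Coalg C$ is cartesian by Lemma~\ref{coalgCcartesian}, its binary products being the pullbacks over $C$ and its terminal object $(id_C)$. For the closed symmetric monoidal category: by Lemma~\ref{cotensor}, $(Vec^C,\otimes^C,C)$ is symmetric monoidal, and because $C$ is assumed \emph{cosemisimple} it is moreover closed, with internal hom $hom^C$. This is in fact the only place in the whole argument where cosemisimplicity is used: Lemmas~\ref{coalgCcartesian}, \ref{strong} and~\ref{leftadjoint} all go through for an arbitrary cocommutative $C$.

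For the monoidal adjunction I would view the cartesian category $Coalg C$ as a symmetric monoidal category via its finite-product structure. By Lemma~\ref{strong}, $U^C\colon(Coalg C,\times,(id_C))\to(Vec^C,\otimes^C,C)$ is strong monoidal, carrying $(id_C)$ to the unit $(C,\Delta_C)$ and $(\phi)\times(\psi)$ to $U^C(\phi)\otimes^C U^C(\psi)$. By Lemma~\ref{leftadjoint}, $U^C$ has a right adjoint $R^C$. Proposition~\ref{Kelly} then applies verbatim: a monoidal functor with a right adjoint yields a monoidal adjunction exactly when it is strong monoidal, so $R^C$ acquires a canonical lax monoidal structure for which the unit and counit of $U^C\dashv R^C$ are monoidal natural transformations, as in Definition~\ref{MONOIDAL NATURAL TRANFORMATION}. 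Assembling the three items above gives precisely an LNL adjunction, so the proof is done, and the induced comonad $U^C R^C$ on $Vec^C$ is the ``!'' modelling duplicability.

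The one point where some actual work remains---and the step I expect to be the main obstacle---is checking that $U^C$ is strong monoidal in the full sense required by Proposition~\ref{Kelly}, not merely on objects. Lemma~\ref{strong} establishes the object-level identities $U^C((\phi)\times(\psi))=U^C(\phi)\otimes^C U^C(\psi)$ and $U^C((id_C))=(C,\Delta_C)$; to feed Kelly's proposition one must additionally know that the resulting comparison isomorphisms are natural in $(\phi)$ and $(\psi)$, satisfy the associativity and unit coherence axioms of a monoidal functor, and are compatible with the symmetries. I would verify this by matching the equalizer presentation of $\otimes^C$ from the proof of Lemma~\ref{cotensor} against the pullback presentation of $\times$ in $Coalg C$ from Lemma~\ref{coalgCcartesian}; the coherence diagrams then reduce, via the universal properties of the equalizers involved, to coassociativity and counitality of the comultiplications, while compatibility with the symmetry is inherited from the fact that the symmetry on $\otimes^C$ is induced by the transposition $\tau$, just as in Lemma~\ref{cotensor}.
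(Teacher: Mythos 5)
Your proof is correct and follows essentially the same route as the paper's, which likewise assembles Lemma~\ref{coalgCcartesian} (cartesianness of $Coalg\,C$), Lemma~\ref{cotensor} (closed symmetric monoidal structure on $Vec^C$, using cosemisimplicity exactly where you say), Lemma~\ref{strong}, Lemma~\ref{leftadjoint} and Proposition~\ref{Kelly} to obtain the monoidal adjunction. Your closing remark that Lemma~\ref{strong} only exhibits the comparison isomorphisms on objects, so that naturality and the coherence axioms still need to be checked against the equalizer and pullback presentations, is a legitimate point of care that the paper's proof passes over silently.
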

\begin{proof}
	Indeed, $Coalg C$ is a cartesian category (Lemma \ref{coalgCcartesian}), $Vec^C$ is monoidal closed (Lemma \ref{cotensor}) and $U^C \dashv R^C$ is a monoidal adjunction (Lemmas \ref{strong} and \ref{leftadjoint} and Proposition \ref{Kelly}).\\

\end{proof}


	
\section{A category indexed by coalgebras}

We keep restricting to the category $Coalg$ of cocommutative coalgebras over a fixed field $\Bbbk$ and we consider, for each such a coalgebra $C$, the category $Vec^C$ of its comodules. \\
This construction gives rise to what is known as an indexed category over $Coalg$ (we skip the general theory of indexed categories and only precise the needed notions in this context. For more details, we refer the reader to \cite{kn:PS}.)
Indeed,
\begin{itemize}
	\item for each object $C$ in $Coalg$, we have the category $Vec^C$,
	\item for each morphism $\phi: D\to C$ in $Coalg$, we get a functor $\phi^*:Vec^{C}\to Vec^{D}$, defined as follows: if $V:=(V,\rho^C:V\to V\otimes C)$ is a $C$-comodule, we consider
	$$
	\phi^*(V)=V\otimes^C U^C((\phi)),
	$$
	as underlying set but with a coaction induced by the one of $D$. We will be more precise in the following remark.
\end{itemize}

\begin{remark}\rm
	\begin{itemize}
		\item[1)] The elements in $\phi^*(V)$ are linearly generated by all elements of the form $v\otimes d$ verifying $\sum v_0\otimes v_1 \otimes d= \sum v\otimes \phi(d_1) \otimes d_2$ (see~\ref{ss:homyprod}).
		\item[2)] If we call $\rho$ the coaction of $\phi^*(V)$ we have that $\rho(v\otimes d)= \sum v\otimes d_1\otimes d_2$.
		\item[3)] If we call $\rho^C:V\to V\otimes C$ the coaction of $V$, then the pair $(\phi^*(V),\rho)$ can be described as the first column of the following diagram, where both rows are equalizers in $Vec$:
		$$
		\xymatrix
		{
			\phi^*(V) \ar[r] \ar[d]_\rho & V\otimes D \ar@<0.5ex>[rrr]^-{id_V\otimes ((\phi\otimes id_D)\Delta)} \ar@<-0.5ex>[rrr]_-{\rho^C\otimes id_D}\ar[d]_-{id_V\otimes \Delta}&&& V\otimes C \otimes D \ar[d]^-{id_V \otimes id_C \otimes \Delta}\\
			\phi^*(V)\otimes D \ar[r] & V\otimes D \otimes D \ar@<0.5ex>[rrr]^-{id_{V}\otimes ((\phi\otimes id_D)\Delta)\otimes id_D} \ar@<-0.5ex>[rrr]_-{\rho^C\otimes id_{D\otimes D}}&&& V\otimes C \otimes D \otimes D
		}
		$$
	\end{itemize}
\end{remark}
This indexed category will be essential in the construction of linear hyperdoctrines that we propose in Section \ref{s:lhyp}. We will work in fact by indexing $LNL$ adjunctions over coalgebras $C$ that will have $Vec^C$ as its symmetric monoidal closed underlying category, linked by functors of the form $\phi^*$.

\subsection{Beck-Chevalley condition}
The mentioned indexed category $(Coalg, Vec^{ \_}, \_^*)$ satisfies what is known as the {\em Beck (or Beck-Chevalley) condition}, meaning that $3$ of Proposition~\ref{BC} below holds. This is proved in \cite{kn:GP}; we present here a more explicit and selfcontained way to prove it.
\ \\
\begin{proposition}\rm \label{BC}
	\begin{enumerate}
		\item For every morphism $\phi:D\to C$, the functor $\phi^*$ has a left adjoint that we call $\Sigma_\phi$.
		\item If $\phi, \psi$ are composable morphisms of coalgebras, $\Sigma_{\psi \circ \phi}=\Sigma_{\psi} \circ \Sigma_\phi$, and therefore $(\psi\circ \phi)^*\cong \phi^*\circ \psi^*$.
	
		\item~\label{beck}
		Given a pullback diagram in Coalg
		$$
		\xymatrix{
			D\ar[r]^\delta \ar[d]_\gamma & D_1\ar[d]^\beta\\
			D_2 \ar[r]^\alpha & C
		}
		$$
		the canonical natural transformation
		$$
		\Sigma_\delta\gamma^* \Rightarrow \beta^*\Sigma_\alpha: Vec^{D_2} \to Vec^{D_1}
		$$
		is an isomorphism.
	\end{enumerate}
\end{proposition}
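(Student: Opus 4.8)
The plan is to exhibit, for every $\phi:D\to C$ in $Coalg$, the left adjoint $\Sigma_\phi$ of $\phi^*$ concretely as the \emph{pushforward of the coaction} along $\phi$; then (2) and (3) follow from strict functoriality of pushforward, the remaining content being bookkeeping in Sweedler notation where cocommutativity is used to interchange left and right coactions. For (1) I define $\Sigma_\phi:Vec^D\to Vec^C$ on objects by $\Sigma_\phi(W,\rho)=\big(W,(id_W\otimes\phi)\circ\rho\big)$ and on morphisms as the identity underlying linear map; that $(id_W\otimes\phi)\circ\rho$ is again a coaction is immediate from $\phi$ being a morphism of coalgebras, and functoriality is clear. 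I would then establish $\Sigma_\phi\dashv\phi^*$ directly, recalling $\phi^*V=V\otimes^C U^C((\phi))\subseteq V\otimes D$: the counit $\Sigma_\phi\phi^*V\to V$ is the restriction of $id_V\otimes\varepsilon_D$, and the unit $\eta_W:W\to\phi^*\Sigma_\phi W$ sends $w\mapsto\rho(w)=\sum w_0\otimes w_1$; checking that $\rho(w)$ lands in the cotensor $(\Sigma_\phi W)\otimes^C U^C((\phi))$, that $\eta_W$ is a $D$-comodule map, and the two triangle identities are short Sweedler computations using coassociativity and cocommutativity of $D$. (Equivalently, verify the natural bijection $Vec^C(\Sigma_\phi W,V)\cong Vec^D(W,\phi^*V)$, $f\mapsto\big(w\mapsto\sum f(w_0)\otimes w_1\big)$.) No hypothesis on cosemisimplicity enters here.

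For (2), pushforward is strictly functorial: $\Sigma_\psi\Sigma_\phi(W,\rho)=\big(W,(id_W\otimes\psi)(id_W\otimes\phi)\rho\big)=\big(W,(id_W\otimes(\psi\circ\phi))\rho\big)=\Sigma_{\psi\circ\phi}(W,\rho)$, and likewise on morphisms, so $\Sigma_{\psi\circ\phi}=\Sigma_\psi\circ\Sigma_\phi$ on the nose; since $\phi^*\circ\psi^*$ is then right adjoint to $\Sigma_\psi\circ\Sigma_\phi$ and right adjoints are unique up to canonical isomorphism, $(\psi\circ\phi)^*\cong\phi^*\circ\psi^*$. For (3), from $\beta\circ\delta=\alpha\circ\gamma$ and strict functoriality, part (2) yields a canonical isomorphism $\gamma^*\alpha^*\cong\delta^*\beta^*$, and the transformation in the statement is its mate under $\Sigma_\delta\dashv\delta^*$ and $\Sigma_\alpha\dashv\alpha^*$. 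I would compute this mate and identify it, on a $D_2$-comodule $V$, with the map induced by $id_V\otimes\delta:V\otimes D\to V\otimes D_1$, namely $v\otimes x\mapsto v\otimes\delta(x)$; that it lands in the cotensor defining $\beta^*\Sigma_\alpha V$ uses $\beta\circ\delta=\alpha\circ\gamma$ together with the constraint defining $\gamma^*V$, and that it is a $D_1$-comodule map is immediate since $\delta$ is a coalgebra morphism.

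The heart of (3) is bijectivity of this map. Here I would invoke the description, from Lemma \ref{strong} and its proof, of the pullback $D$ as the cotensor $D_1\otimes^C D_2\subseteq D_1\otimes D_2$ with $\delta,\gamma$ the two projections, and write down the candidate inverse $\sum_j v_j\otimes a_j\ \mapsto\ \sum_j v_{j,0}\otimes(a_j\otimes v_{j,1})$, where $\rho^{D_2}(v_j)=\sum v_{j,0}\otimes v_{j,1}$ --- the point being to reconstruct the discarded $D_2$-coordinate from the coaction on $V$. Three things remain to check: that $\sum_j v_{j,0}\otimes(a_j\otimes v_{j,1})$ genuinely lies in $V\otimes D$, and that the two composites are the identity --- one reducing to the comodule counit axiom $\sum v_0\varepsilon(v_1)=v$, the other to applying $id_V\otimes\delta$ to the constraint defining $\gamma^*V$. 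I expect the first of these --- that the reconstructed coordinate really lands in the pullback $D$ --- to be the main obstacle, being the one point where the pullback hypothesis, coassociativity of the $D_2$-coaction on $V$, cocommutativity of $D_2$, and $\beta\circ\delta=\alpha\circ\gamma$ must all be combined: concretely one pushes the defining constraint of $\beta^*\Sigma_\alpha V$ forward along $\rho^{D_2}$ in the first tensor factor and rearranges using cocommutativity, after which membership in $V\otimes D$ --- i.e.\ in $V\otimes(D_1\otimes^C D_2)$, the equalizer appearing in Lemma \ref{strong} tensored with $V$ --- follows by comparing terms.
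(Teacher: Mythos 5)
Your proposal is correct and follows essentially the same route as the paper: the same explicit pushforward $\Sigma_\phi(W,\rho)=(W,(id_W\otimes\phi)\rho)$ with the same unit/counit for part (1), strict functoriality plus uniqueness of adjoints for part (2), and for part (3) the same pair of mutually inverse maps --- the one induced by $id_V\otimes\delta$ and the reconstruction $\sum v_0\otimes(d^1\otimes v_1)$ using the identification of the pullback $D$ with the cotensor $D_1\otimes^C D_2$ from Lemma \ref{strong} (the paper packages this identification as a morphism $t:D'\to D$ obtained from the universal property, which is the same thing). The verifications you flag as remaining are exactly the ones the paper also leaves as computations, and your sketches of them are sound.
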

\begin{proof}
	\begin{enumerate}
		\item A morphism $\phi:D\to C$ of coalgebras has a natural way of transforming a $D$-comodule into a $C$-comodule. Indeed, define $\Sigma_\phi:Vec^D\to Vec^C$ by
		$$
		\Sigma_\phi (V,\rho^D)=(V,(id_V\otimes \phi)\rho^D).
		$$
		on objects and $\Sigma_\phi (f)=f$ on morphisms. \\
		It can be proved that $\Sigma_\phi$ is a left adjoint for $\phi^*$. Indeed, consider a $D$-comodule $(V,\rho^D)$ and a $C$-comodule $(W,\rho^C)$ and define a bijection
		$$
		Hom^C\left (\Sigma_\phi (V,\rho^D), (W,\rho^C)\right ) \leftrightarrow Hom^D\left ((V,\rho^D), \phi^*(W,\rho^C)\right )
		$$
		as follows: a morphism $f:V\to W$ of $C$-comodules corresponds to a morphism $\hat f: V\to \phi^*(W)$ of $D$-comodules defined by $\hat f(v)=\sum f(v_0)\otimes v_1$; a morphism $g:V\to \phi^*(W)$ of $C$-comodules corresponds to a morphism $\tilde g=(id\otimes \varepsilon_D)\circ g$ of $D$-comodules.
			\item The first part is clear by the definition of $\Sigma_\phi$ and the second one follows from the fact that $\phi^*$ and $\psi^*$ are respective right adjoints of $\Sigma_\phi$ and $\Sigma_\psi$ (using uniqueness of right adjoints up to natural isomorphisms).
		\item The unit of the adjunction $(\Sigma_\alpha, \alpha^*)$ is the natural transformation
		$$
		\eta_\alpha: 1_{Vec^{D_2}}\Rightarrow \alpha^*\Sigma_\alpha.
		$$
		Composing with $\gamma^*$ and using the commutation of the pullback diagram, we get a natural transformation
		$$
		\gamma^*\eta: \gamma^*\Rightarrow \delta^* \beta^* \Sigma_\alpha
		$$
		between functors from $Vec^{D_2}$ to $Vec^{D}$, which induces, by the counit of the adjunction $(\Sigma_\delta, \delta^*)$, the (canonical) natural transformation
		$$
		\Sigma_\delta \gamma^*\Rightarrow \beta^*\Sigma_\alpha
		$$
		between functors from $Vec^{D_2}$ to $Vec^{D_1}$.
		We want to prove that it is in fact an isomorphism and that it will make the following diagram commutative:	
		$$
		\xymatrix{
			Vec^{D_2}\ar[r]^{\Sigma_\alpha} \ar[d]_{\gamma^*} & Vec^C\ar[d]^{\beta^*} \ar@/_/@{=>}[dl]_\varphi \\
			Vec^{D} \ar[r]^{\Sigma_\delta} & Vec^{D_1}
		}
		$$	
		The explicit form of the natural morphism follows. Take a $D_2$-comodule $(V,\rho_2)$ (that we will call shortly $V$). Applying the functor $\beta^* \circ \Sigma_\alpha$ to $V$, we obtain the comodule that we will denoted by $(V_\alpha)^\beta$ defined on the vector space
		$$
		\Bbbk \{v\otimes d^1\in V\otimes D_1\mid \sum v_0\otimes \alpha(v_1)\otimes d^1=\sum v\otimes \beta(d^1_1)\otimes d^1_2\}
		$$
		by the coaction $\rho_1=id_V\otimes \Delta_{D_1}$.\\
		On the other hand, applying the functor $\Sigma_\delta \circ \gamma^*$ to $V$, we obtain the comodule that we will denoted by $(V^\gamma)_\delta$ defined on the vector space
		$$
		\Bbbk \{v\otimes d\in V\otimes D\mid \sum v_0\otimes v_1 \otimes d=\sum v \otimes \gamma(d_1)\otimes d_2\}
		$$ 
		equipped with the coaction $\tilde \rho_1 (v\otimes d)=\sum v\otimes d_1\otimes \delta (d_2)$.\\
		The instance in $V$ of the natural morphism in question is
		$$
		\varphi_V: (V_\alpha)^\beta \to (V^\gamma)_\delta,
		$$	
		defined by $\varphi (v\otimes d^1)=\sum v_0 \otimes t(v_1\otimes d^1)$, where $t:D'\to D$ is defined as follows: consider the equalizer in $Coalg$
		$\xymatrix{
			D'\hookrightarrow D_1\otimes D_2 \ar@<0.5ex>[rr]^-{\beta\circ \pi_1} \ar@<-0.5ex>[rr]_-{\alpha\circ \pi_2}&& C
		}
		$ and the (unique) morphism of coalgebras $p:D\to D_1\otimes D_2$ such that $\delta p=\pi_1$ and $\gamma p=\pi_2$. As $p$ equalizes the parallel map above, we deduce that there is a (unique) morphism of coalgebras $t:D'\to D$ such that $pt$ is the canonical map from $D'$ to $D_1\otimes D_2$.\\
		\ \\
		Finally, it can be proved that the inverse of $\varphi_V$ is the morphism $\psi_V:(V^\gamma)_\delta \to (V_\alpha)^\beta$,
		defined by $\psi_V (v\otimes d)=v\otimes \delta(d)$.
		
	\end{enumerate}
\end{proof}

\subsubsection{$\phi^*\dashv \forall_{\phi}$}

\begin{proposition}\rm

Let $\phi:D\rightarrow D'$ be a coalgebra map. The following propositions are equivalent:
\begin{itemize}
\item the $D'$-comodule $U^{D'}(\phi)=(D,(id \otimes \phi)\Delta)$ is coflat,
\item $\phi^*:Vec^{D'}\rightarrow Vec^D$ has a right adjoint $\forall_{\phi}:Vec^D\rightarrow Vec^{D'}$.
\end{itemize}
\end{proposition}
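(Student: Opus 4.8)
The plan is to recognise $\phi^{*}$ as, up to the forgetful functor to $Vec$, the cotensor functor $U^{D'}(\phi)\otimes^{D'}-$, and then to transfer exactness and cocontinuity back and forth between the two. Write $C:=U^{D'}(\phi)=(D,(id\otimes\phi)\Delta_{D})$, viewed as a $D'$-comodule. By the explicit description of $\phi^{*}$ recorded above, the underlying vector space of $\phi^{*}(V)$ is precisely the cotensor $V\otimes^{D'}C$ (the two equalizer conditions coincide by cocommutativity of $D$), which, since $(Vec^{D'},\otimes^{D'},D')$ is symmetric (Lemma \ref{cotensor}), is naturally isomorphic to $C\otimes^{D'}V$. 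Writing $F_{D}\colon Vec^{D}\to Vec$ and $F_{D'}\colon Vec^{D'}\to Vec$ for the forgetful functors, we thus obtain a natural isomorphism $F_{D}\circ\phi^{*}\cong F_{D'}\circ(C\otimes^{D'}-)\colon Vec^{D'}\to Vec$. I will use throughout that $F_{D}$ and $F_{D'}$ are faithful, exact (tensoring over the field $\Bbbk$ is exact), have the cofree-comodule functors $-\otimes D$, $-\otimes D'$ as right adjoints, and create colimits; consequently a comodule morphism is epi iff it is surjective, and a cocone of comodules is colimiting iff its image in $Vec$ is.

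For the implication ``right adjoint $\Rightarrow$ coflat'': if $\phi^{*}$ has a right adjoint $\forall_{\phi}$, then $\phi^{*}$ preserves all colimits, in particular epimorphisms; post-composing with the epi-preserving functor $F_{D}$ and using the isomorphism above, $C\otimes^{D'}-$ preserves epimorphisms, i.e.\ $C=U^{D'}(\phi)$ is coflat by definition (equivalently, by Proposition \ref{p:injcoflat}, $U^{D'}(\phi)$ is injective in $Vec^{D'}$).

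For the converse, assume $C$ is coflat. The functor $C\otimes^{D'}-$ is always left exact (Remark \ref{r:coflat}); coflatness makes it send epimorphisms to epimorphisms, and a left exact additive functor preserving epimorphisms is exact. It also preserves arbitrary coproducts (Section \ref{ss:homyprod}), and an additive exact functor between abelian categories that preserves coproducts preserves all colimits; hence $C\otimes^{D'}-$ is cocontinuous. Now let $L$ be the vertex of a colimiting cocone over a small diagram in $Vec^{D'}$: then $F_{D'}$ carries it to a colimiting cocone in $Vec$, $C\otimes^{D'}-$ carries that to a colimiting cocone in $Vec$ with vertex $C\otimes^{D'}F_{D'}(L)\cong F_{D}(\phi^{*}(L))$, and since $F_{D}$ creates colimits this exhibits $\phi^{*}(L)$ as the colimit of the $\phi^{*}$-image diagram. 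Thus $\phi^{*}$ is cocontinuous. Finally, $Vec^{D'}$ is cocomplete, co-well-powered, and generated by its finite-dimensional subcomodules (exactly as in the proof of Lemma \ref{leftadjoint}), and $Vec^{D}$ is locally small, so the Special Adjoint Functor Theorem provides the desired right adjoint $\forall_{\phi}\colon Vec^{D}\to Vec^{D'}$.

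The genuinely delicate point --- and the main obstacle --- is the naturality invoked in the previous paragraph \emph{at the level of comodule structures}: one must check that the $D$-coaction $\rho(v\otimes d)=\sum v\otimes d_{1}\otimes d_{2}$ of $\phi^{*}(V)$ is compatible with the comparison maps produced by $C\otimes^{D'}-$, so that the colimit computed in $Vec$ genuinely carries the coaction of $\phi^{*}(L)$ and not merely an isomorphic copy of it. I would handle this exactly as in Lemmas \ref{cotensor} and \ref{strong}: present $\phi^{*}(V)$ through the coreflexive equalizer displayed above, use that $-\otimes D$ and $-\otimes D\otimes D$ are exact and cocontinuous on $Vec$, and chase the resulting diagram. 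Everything else is a routine application of the Special Adjoint Functor Theorem together with the exactness properties of cotensor products collected in Section \ref{ss:coss}.
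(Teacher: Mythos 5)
Your proof is correct and follows essentially the same route as the paper's: both identify $\phi^{*}$ with the cotensor functor $-\otimes^{D'}U^{D'}(\phi)$ after forgetting the $D$-comodule structure, reduce cocontinuity to preservation of epimorphisms and coproducts in an abelian setting, and invoke the Special Adjoint Functor Theorem. The only (cosmetic) difference is that you transfer epi-preservation and colimits along the forgetful functors to $Vec$, whereas the paper uses the identity $\Sigma_\phi\phi^{*}(V)\cong V\otimes^{D'}U^{D'}(\phi)$ and the fact that $\Sigma_\phi$ reflects epimorphisms; since both functors are the identity on underlying vector spaces, the two arguments coincide in substance.
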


\begin{proof}

In order to obtain a right adjoint, note that $Vec^D$ is locally small, cocomplete and cowellpowered, for any coalgebra $D$, so, by the Special Adjoint Functor Theorem, it is enough to prove that $\phi^*$ preserves colimits.\\
It is easy to verify that, if $\phi:D\to D'$ is a morphism of coalgebras, the functor $\phi^*:Vec^{D'}\to Vec^D$ preserves coproducts (direct sums of comodules), so it remains to see that it also preserves coequalizers. As $Vec^D$ is abelian, it is enough to show that the $\phi^*$ preserves epimorphisms. Now, for a $
D'$-comodule $V$, we have
$$
\Sigma_\phi \phi^*(V)\cong V\otimes^{D'} U^{D'}(\phi),
$$
as $D'$-comodules. As $U^{D'}(\phi)$ is coflat by hypothesis, we get that $-\otimes^{D'} U^{D'}((\phi))$ preserves epimorphisms, and on the other hand it is clear that $\Sigma_\phi$ reflects epimorphisms, so we are done. \\
Conversely, if $\phi^*$ has a right adjoint, as $\Sigma_\phi$ also has, we get that $-\otimes^{D'}U^{D'}(\phi)$ has a right adjoint and therefore $U^{D'}(\phi)$ is coflat.
\end{proof}

\textit{Beck condition.}
It turns out that since we have $\sum_{\phi}\dashv \phi^{*}\dashv \forall_{\phi}$ and $\sum_{\phi}$ satisfies Beck  condition then by adjointness $\forall_{\phi}$ also  satisfies Beck condition  whenever it exists, i.e.,:

$$
	\xymatrix@=10pt{
		A\ar[rrrr]^{\vartheta}\ar[ddd]_{\phi}
		&&&& B\ar[ddd]^{\psi}\\
		&&&&\\
		&&&&\\
		C\ar[rrrr]_{\eta}
		&&&& D
	}\hspace{3cm}$$
	is a pullback then

$$
	\xymatrix@=10pt{
		Vect^{B}\ar[rrrr]^{\vartheta^{*}}\ar[ddd]_{\forall_{\psi}}
		&&&& Vect^{A}\ar[ddd]^{\forall_{\phi}}\\
		&&&&\\
		&&&&\\
		Vect^{D}\ar[rrrr]_{\eta^{*}}
		&&&& Vect^{C}
	}\hspace{3cm}$$
	
	commutes. See \cite{kn:GP} for details.

\subsection{More structure preserved}
The following Lemma is needed to prove monoidality of the functor $\phi^*$.

\begin{lemma}\label{l:frobenius}
Let $\phi: C \to D$ be a morphism of coalgebras. If $(V,\rho_V)$ is a
$C$-comodule and $(W,\rho_W)$ is a $D$-comodule (we will denote them by $V$ and $W$ respectively), then
$$
\Sigma_\phi(V\otimes^C \phi^*(W))\cong \Sigma_\phi(V)\otimes^D W.
$$
\end{lemma}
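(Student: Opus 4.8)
The statement is a "Frobenius reciprocity"-type isomorphism, and the natural strategy is to exhibit the map directly and verify it is an isomorphism of $D$-comodules by describing underlying vector spaces as equalizers. The plan is to first write both sides concretely. On the left, $V \otimes^C \phi^*(W)$ is the subspace of $V \otimes W$ (note $\phi^*(W)$ has the same underlying vector space as $W$) cut out by the equalizer condition coming from $\rho_V$ on the $V$-factor and from $(\phi \otimes \mathrm{id})\rho_W$ — no wait, more carefully: $\phi^*(W) = W \otimes^D U^D((\phi))$, so I should first unwind $\phi^*(W)$ as the equalizer of the pair $V \otimes W \rightrightarrows V \otimes D \otimes W$ (or the appropriate arrangement), then intersect with the $\otimes^C$-condition. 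Applying $\Sigma_\phi$ only changes the coaction (pushing it along $\phi$), not the underlying space. On the right, $\Sigma_\phi(V)$ is $V$ with coaction $(\mathrm{id}_V \otimes \phi)\rho_V$, and $\Sigma_\phi(V) \otimes^D W$ is the subspace of $V \otimes W$ equalizing $(\mathrm{id}_V \otimes \phi)\rho_V \otimes \mathrm{id}_W$ against $\mathrm{id}_V \otimes \tau \rho_W$ inside $V \otimes D \otimes W$.

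The key observation is that both subspaces of $V \otimes W$ should turn out to be defined by \emph{the same} equalizer condition. On the left side the defining condition (after unwinding $\phi^*$) combines two equations: one saying $v \otimes w$ lies in $V \otimes \phi^*(W)$ and one saying it satisfies the cotensor condition over $C$; on the right the single condition is the cotensor condition over $D$ with the pushed-forward coaction on $V$. The plan is to show that the combined left-hand condition is equivalent to the right-hand one — in one direction this is essentially substitution using the coaction axioms, and in the other direction one recovers the "extra" $D$-valued information using that the element already lies in the image after applying $\phi$ and then using counitality. So the main technical step is: prove the equality of the two subspaces of $V \otimes W$ by a Sweedler-notation chase, carefully tracking where $\phi$ is applied.

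Once the underlying vector spaces are identified, the second step is to check that the canonical identity map $V \otimes W \to V \otimes W$ restricts to an isomorphism that is compatible with the $D$-coactions on both sides. On the left, after applying $\Sigma_\phi$, the coaction on $V \otimes^C \phi^*(W)$ is $(\mathrm{id} \otimes \phi)$ applied to the $\otimes^C$-coaction $\delta(x \otimes y) = x \otimes w(y)$ — so it ends up being $x \otimes (\mathrm{id}_W \otimes \phi)\rho_W(y)$; on the right the coaction on $\Sigma_\phi(V) \otimes^D W$ is again $x \otimes \rho_W(y)$ but with $\rho_W$ being the genuine $D$-coaction (no $\phi$ needed, since $W$ is already a $D$-comodule). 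The plan is to check these agree — and indeed they should, because on the subspace in question the two expressions coincide. Naturality in $V$ and in $W$ is then routine since everything is built from the identity on underlying spaces.

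\textbf{Main obstacle.} I expect the bookkeeping around $\phi^*$ to be the genuinely delicate point: $\phi^*(W)$ is \emph{not} simply "$W$ with a twisted coaction" — it is a cotensor product $W \otimes^D U^D((\phi))$, hence a \emph{subspace} of $W \otimes D$, not of $W$. So one must be careful whether the isomorphism is literally the identity on $V \otimes W$ or involves a counit $\mathrm{id} \otimes \varepsilon_D$ collapsing the $D$-coordinate (as happens in the adjunction bijection in the proof of Proposition~\ref{BC}(1)). The cleanest route is probably to define the map $\Sigma_\phi(V \otimes^C \phi^*(W)) \to \Sigma_\phi(V) \otimes^D W$ by $v \otimes (w \otimes d) \mapsto v \otimes w \varepsilon_D(d)$ — no, rather $v \otimes (w \otimes d) \mapsto \sum v \otimes w_0 \varepsilon(\dots)$ — and its inverse using the coactions to reconstruct the $D$-component; verifying these are mutually inverse and land in the correct equalizer subspaces is where the real work lies, and it is exactly the same kind of computation already carried out in Lemmas~\ref{strong} and the proof of Proposition~\ref{BC}(3).
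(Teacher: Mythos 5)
Your proposal lands on essentially the same proof as the paper: the isomorphism is the counit collapse $v\otimes w\otimes c\mapsto v\otimes w\,\varepsilon(c)$, with inverse $v\otimes w\mapsto \sum v_0\otimes w\otimes v_1$ reconstructing the extra tensor factor from the coaction of $V$, followed by exactly the checklist you list (well-definedness, mutual inverseness, $D$-comodule morphism). The only slip is bookkeeping: since $\phi\colon C\to D$, the comodule $\phi^*(W)=W\otimes^D U^D((\phi))$ sits inside $W\otimes C$ (not $W\otimes D$), so the factor being collapsed and reconstructed is the $C$-component, and your initial idea of identifying both sides as the same subspace of $V\otimes W$ via the identity map cannot work literally --- as you yourself correctly note before switching to the counit map.
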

\begin{proof}
	It is enough to verify that the maps:
	$$
	\varphi:\Sigma_\phi(V\otimes^C \phi^*(W))\to \Sigma_\phi(V)\otimes^D W, \ \ \ \ 
	\psi: \Sigma_\phi(V)\otimes^D W \to \Sigma_\phi(V\otimes^C \phi^*(W))
	$$
	defined by $\varphi(v\otimes w \otimes c)=v\otimes w\varepsilon(c)$ and $\psi(v\otimes w)=\sum v_0\otimes w \otimes v_1$ are:
	\begin{itemize}
		\item indeed well defined,
		\item inverse to each other,
		\item morphisms of $D$-comodules.
	\end{itemize}
We leave the details for the reader. Note that it is enough to verify that $\varphi$ (or $\psi$) is a morphism of $D$-comodules.
\end{proof}
\begin{proposition}\rm \label{prop:ssmc}
	Let $\phi:C\to D$ be a morphism of cosemisimple coalgebras. Then the functor $\phi^*:Vec^{D}\to Vec^{C}$ is strong symmetric monoidal closed.
\end{proposition}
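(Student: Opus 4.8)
The plan is to treat the three adjectives in turn, using the explicit descriptions of $\otimes^C$, $\phi^*$ and $\Sigma_\phi$, the projection formula of Lemma~\ref{l:frobenius}, and the adjunction $\Sigma_\phi\dashv\phi^*$ from Proposition~\ref{BC}(1). First I would equip $\phi^*$ with a lax symmetric monoidal structure. The quickest route is to observe that the left adjoint $\Sigma_\phi\colon Vec^C\to Vec^D$ is canonically \emph{oplax} symmetric monoidal: the unit comparison $\Sigma_\phi(C)\to D$ is the coalgebra map $\phi$ itself (note $\Sigma_\phi(C,\Delta_C)=(C,(id\otimes\phi)\Delta_C)=U^D((\phi))$), and the tensor comparison $\Sigma_\phi(V\otimes^C W)\to\Sigma_\phi(V)\otimes^D\Sigma_\phi(W)$ is the inclusion of one equalizer into another, both natural and coherent since everything in sight is built from the canonical isomorphisms of $Vec$ and the transposition $\tau$. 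By doctrinal adjunction $\phi^*$ then inherits a lax symmetric monoidal structure, with structure maps
\[ m_I\colon C\longrightarrow\phi^*(D),\qquad m_{V,W}\colon\phi^*(V)\otimes^C\phi^*(W)\longrightarrow\phi^*(V\otimes^D W); \]
alternatively one writes these down explicitly in Sweedler notation in the style of Lemma~\ref{l:frobenius}. It then remains to show that $m_I$, $m_{V,W}$ and the canonical closed-structure comparison are isomorphisms (symmetry of the structure being routine). Cosemisimplicity of $C$ and $D$ is used only at the closed step --- indeed it is needed even to make sense of ``closed'', since by Lemma~\ref{cotensor} $Vec^C$ is closed precisely when $C$ is cosemisimple.

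\emph{Strongness of the monoidal structure.} The key point is that $\Sigma_\phi$ is conservative: on underlying linear maps it is the identity, so it reflects isomorphisms. Hence it suffices to see that $\Sigma_\phi(m_{V,W})$ is invertible. Applying $\Sigma_\phi$ to the source, Lemma~\ref{l:frobenius} (with the $C$-comodule $\phi^*(V)$ and the $D$-comodule $W$) together with the isomorphism $\Sigma_\phi\phi^*(X)\cong X\otimes^D U^D((\phi))$ --- which is the case $V=C$ of Lemma~\ref{l:frobenius}, and is also recorded in the proof of the Proposition giving $\phi^*\dashv\forall_\phi$ --- yields
\[ \Sigma_\phi\bigl(\phi^*(V)\otimes^C\phi^*(W)\bigr)\cong\Sigma_\phi\phi^*(V)\otimes^D W\cong\bigl(V\otimes^D U^D((\phi))\bigr)\otimes^D W, \]
while applying $\Sigma_\phi$ to the target gives $\Sigma_\phi\phi^*(V\otimes^D W)\cong(V\otimes^D W)\otimes^D U^D((\phi))$. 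By associativity and symmetry of $\otimes^D$ (Lemma~\ref{cotensor}) these are naturally isomorphic, and one checks that the isomorphism so produced is exactly $\Sigma_\phi(m_{V,W})$; conservativity of $\Sigma_\phi$ then forces $m_{V,W}$ to be an isomorphism, and one repeats the argument (or computes directly) for $m_I$. In fact a short direct computation shows $\phi^*(D)\cong(C,\Delta_C)$ as $C$-comodules via $c\mapsto\sum\phi(c_1)\otimes c_2$, with inverse $d\otimes c\mapsto\varepsilon_D(d)c$ (using $\varepsilon_D\circ\phi=\varepsilon_C$), and this map is $m_I$.

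\emph{Preservation of the internal hom.} Here $C$ and $D$ being cosemisimple is essential, so that by Lemma~\ref{cotensor} both $(Vec^C,\otimes^C,C)$ and $(Vec^D,\otimes^D,D)$ are closed, with internal homs $hom^C$ and $hom^D$. I would show the canonical comparison $\phi^*\bigl(hom^D(B,B')\bigr)\to hom^C\bigl(\phi^*(B),\phi^*(B')\bigr)$ is an isomorphism by Yoneda: for every $C$-comodule $Z$,
\[ Hom^C\!\bigl(Z,\phi^*hom^D(B,B')\bigr)\cong Hom^D\!\bigl(\Sigma_\phi Z,hom^D(B,B')\bigr)\cong Hom^D\!\bigl(\Sigma_\phi Z\otimes^D B,\ B'\bigr), \]
and since $\Sigma_\phi Z\otimes^D B\cong\Sigma_\phi(Z\otimes^C\phi^*B)$ by Lemma~\ref{l:frobenius}, this continues as
\[ \cong Hom^D\!\bigl(\Sigma_\phi(Z\otimes^C\phi^*B),\ B'\bigr)\cong Hom^C\!\bigl(Z\otimes^C\phi^*B,\ \phi^*B'\bigr)\cong Hom^C\!\bigl(Z,\ hom^C(\phi^*B,\phi^*B')\bigr), \]
naturally in $Z$; the Yoneda lemma then gives the isomorphism, and a routine diagram chase identifies it with the canonical comparison (equivalently, its inverse).

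\emph{Expected main obstacle.} All the computations above are of a formal, bookkeeping nature; the genuinely delicate part is keeping track of \emph{which} map is being inverted --- i.e. verifying that the abstract isomorphisms produced via Lemma~\ref{l:frobenius} and via Yoneda genuinely coincide with the canonical comparison maps $m_{V,W}$, $m_I$ and the closed-structure comparison, so that one has really established the ``strong'' and ``closed'' conditions and not merely the existence of some isomorphism. If one prefers to avoid the doctrinal-adjunction formalism altogether, the alternative is to write $m_{V,W}$, its inverse, and the internal-hom comparison out explicitly in Sweedler notation and check well-definedness on the relevant cotensor subspaces together with naturality and coherence directly --- longer, but sidestepping this point.
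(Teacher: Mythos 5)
Your proposal is correct, and its overall architecture (strong monoidal, then symmetric, then closed) matches the paper's; the difference lies in how the tensor comparison is shown to be invertible. For closedness you reproduce exactly the paper's argument (which follows Grunenfelder--Par\'e): the five-step adjunction chain through $\Sigma_\phi\dashv\phi^*$, $\otimes\dashv hom$, and Lemma~\ref{l:frobenius}, followed by Yoneda --- including the same elision, namely that the Yoneda isomorphism must be identified with the canonical comparison. For strong monoidality the paper instead writes down the two explicit maps $v\otimes c\otimes w\otimes \tilde c\mapsto v\otimes w\otimes\varepsilon(c)\tilde c$ and $v\otimes w\otimes c\mapsto\sum v\otimes c_1\otimes w\otimes c_2$ on the ambient spaces and asserts they restrict to mutually inverse maps on the equalizers; you instead obtain the lax structure by doctrinal adjunction from the oplax structure on $\Sigma_\phi$ and prove invertibility by applying the conservative functor $\Sigma_\phi$, then combining Lemma~\ref{l:frobenius} with $\Sigma_\phi\phi^*(-)\cong -\otimes^D U^D((\phi))$. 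Your route buys a reduction to already-established lemmas and avoids checking well-definedness of an explicit inverse on the cotensor subspace; its cost is precisely the identification you flag, that the composite abstract isomorphism equals $\Sigma_\phi(m_{V,W})$. That identification does go through: on the equalizer defining $\phi^*(V)\otimes^C\phi^*(W)$ the two expressions $v\otimes w\,\varepsilon(\tilde c)\otimes c$ (your composite) and $v\otimes w\otimes\varepsilon(c)\tilde c$ (the canonical comparison) coincide, as one sees by applying $id_V\otimes\varepsilon\otimes id_C\otimes id_W\otimes\varepsilon$ to the defining relation of the cotensor subspace. Your explicit formula for $m_I$, namely $c\mapsto\sum\phi(c_1)\otimes c_2$ with inverse $d\otimes c\mapsto\varepsilon_D(d)c$, is correct, as is your observation that cosemisimplicity enters only to make $Vec^C$ and $Vec^D$ closed; the paper's strong-monoidality argument likewise does not use it.
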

\begin{proof}
	In order to have strong monoidality, we need to prove that $\phi^*(V\otimes^{D} W)\cong \phi^*(V)\otimes^C \phi^*(W)$ as $C$-comodules and that $\phi^*(D)\cong C$ as $C$-comodules. The second isomorphism is obvious. The first one is induced from the following maps:
	$$
	\begin{array}{llll}
	V\otimes C\otimes W\otimes C \to V\otimes W\otimes C &&& V\otimes W\otimes C \to V\otimes C\otimes W\otimes C \\
	v\otimes  c\otimes w \otimes \tilde c \mapsto v \otimes w \otimes \varepsilon(c)\tilde c &&& v\otimes w\otimes c \mapsto \sum v\otimes c_1 \otimes w\otimes c_2.
	\end{array}
	$$
	It can be proved that these two morphisms induce inverses morphisms on the corresponding equalizers.\\
	It is easy to check that $\phi^*$ is symmetric.\\
	It remains to proved that $\phi^*$ is closed, i.e., that $hom^C(\phi^*(V),\phi^*(W))\cong \phi^* hom^{D}(V,W)$. Here we follow~\cite{kn:GP} Theorem 2.4. Let call $X$ the left term and $Y$ the right term of the equality we want to prove. We will see that
	$$
	Hom^C(Z,X)\cong Hom^C(Z,Y), 
	$$
	for all $C$-comodule $Z$. Indeed,
	
	$$
	\begin{array}{lll}
	Hom^C(Z,X)&\cong Hom^C (Z\otimes^C \phi^*(V),\phi^*(W) )&\mbox{by adjointness of $\otimes^C$}\\
	&\cong Hom^D \left (\Sigma_\phi (Z\otimes^C \phi^*(V)), W\right )&\mbox{by adjointness of $\phi^*$}\\
	& \cong Hom^D\left ( \Sigma_\phi(Z)\otimes^D V, W \right )&\mbox{by Lemma \ref{l:frobenius}}\\
	&\cong Hom^D \left (\Sigma_\phi(Z), hom^D(V,W)\right )&\mbox{adjointness of $\otimes^D$}\\
	&\cong Hom^C (Z,\phi^*(hom^D(V,W)))&\mbox{adjointness of }\phi^*
	\end{array}	
	$$
	
	The thesis follows by Yoneda's Lemma.
	
\end{proof}

\section{Linear Hyperdoctrine}\label{s:lhyp}

In this section, we recall the notion of Linear Hyperdoctrine presented in~\cite{kn:S3},~\cite{kn:S2},~\cite{kn:M} and we present some examples coming from coalgebras and comodules.\\
In order to give our definition of Linear Hyperdoctrine, we need to make some preliminary considerations.\\

We will use as ``codomain" of our linear hyperdoctrines, the category $LNL$ of the linear-non linear adjunctions.\\
In view of Lemmas $7$ and $12$ in \cite{kn:M}, the objects of $LNL$ are equivalent to what is known as linear categories. The following definition explicits what are the suitable morphisms between linear-non linear adjunctions, proposed in \cite{kn:M}.

\begin{definition}\rm
	The category {\bf $LNL$} has as objects the linear-non linear adjunctions.\\
	If $U\dashv R:\mathcal{C}\rightleftarrows \mathcal{S}$ and $U'\dashv R':\mathcal{C'}\rightleftarrows \mathcal{S'}$ are $LNL$ adjunctions, a morphism $(L,K): U\dashv R \to U'\dashv R'$ is a pair of functors $(L,K)$, where
	\begin{itemize}
		\item $L:\mathcal{C}\to \mathcal{C'},\ \  K:\mathcal{S}\to \mathcal{S'}$,
		\item $KU=U'L, \ \ LR=R'K$,
		\item $L$ is cartesian, $K$ is strong symmetric monoidal closed,
		\item For $\eta, \eta '$ the respective units of $U\dashv R, U'\dashv R'$, $L\eta=\eta'L$ (or, equivalently, $\varepsilon' K=K\varepsilon$, for $\varepsilon$ the counits of the adjunctions.)
	\end{itemize}
	We will call {\bf $LNL$} the category of linear-non linear adjunctions with such morphisms.\\
	
\end{definition}
We present a functor $\psi: Coalg^{op} \to LNL$ that will be the basis of our linear hyperdoctrine. For a cocommutative coalgebra $C$, $\psi(C)$ will be the $LNL$-adjunction associated to $C$ as presented in Section \ref{s:lnl}. We define now $\psi(f)$, for a morphism of coalgebras $f:C'\to C$, the pair of functors $\psi(f)=(L_f,K_f)$ where $K_f: Vec^C\to Vec^{C'}, L_f: Coalg C \to Coalg C'$ are defined by:

$$
K_f=f^*
$$
and the functor $L_f$ is defined:
\begin{itemize}
\item on objects by $L_f(\xymatrix{D\ar[r]^\phi & C})=(X,x),$
where $(X,x)$ is such that the following diagram is a pullback:
$$
\xymatrix
{
	X \ar[r]^x \ar[d]_{\tilde x} &C'\ar[d]^f\\
	D\ar[r]_\phi & C.
}
$$
\item $L_f$ is defined on morphisms by using universality properties of pullbacks.
\end{itemize}

We need the following lemma to prove the main result.

\begin{proposition}\rm
	The pair $(L_f, K_f)$ is a morphism of $LNL$-adjunctions.
\end{proposition}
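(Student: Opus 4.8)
The plan is to verify each of the four defining conditions for a morphism of $LNL$-adjunctions in turn, namely: (i) $L_f$ is cartesian and $K_f$ is strong symmetric monoidal closed; (ii) the square identities $K_f U^C = U^{C'} L_f$ and $L_f R^C = R^{C'} K_f$; and (iii) compatibility of the units, $L_f \eta^C = \eta^{C'} L_f$. The hardest of these is (ii), specifically the identity $K_f U^C = U^{C'} L_f$ on the nose (or up to the canonical iso), so I would begin with the structural facts that are already available and then treat (ii) carefully.

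First I would record that $K_f = f^*$ is strong symmetric monoidal closed directly by Proposition~\ref{prop:ssmc}, since $f:C'\to C$ is a morphism of cosemisimple coalgebras; no further work is needed there. For $L_f$ being cartesian, I would use that $L_f$ is defined by pullback along $f$: the composite $Coalg C \xrightarrow{L_f} Coalg C'$ is the ``pullback functor'' $f^{\sharp}$ associated to $f:C'\to C$ in the category $Coalg$ (viewing $Coalg C$ and $Coalg C'$ as slice-like comma categories). Since $Coalg$ has all finite limits (finite products by Lemma~\ref{cartesian} and equalizers, hence pullbacks, as in the proof of Lemma~\ref{coalgCcartesian}), pulling back preserves the terminal object $(id_C)\mapsto(id_{C'})$ and binary products (pullbacks over $C$ pull back to pullbacks over $C'$ by the usual pasting lemma for pullback squares). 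This gives that $L_f$ is cartesian.

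The core step is the commutation $K_f U^C = U^{C'} L_f$. Unwinding the definitions: given $(\phi\colon D\to C)$ in $Coalg C$, the left side is $f^*$ applied to the $C$-comodule $(D, (id_D\otimes\phi)\Delta_D)$, which by the Remark in Section~4 is the equalizer describing $D\otimes^C U^C((f))$; the right side is $U^{C'}$ of the pullback $(X,x)$ of $\phi$ along $f$, i.e. the $C'$-comodule $(X,(id_X\otimes x)\Delta_X)$. I would show these coincide by identifying $X$ with the vector space $\{d\otimes c' : (id\otimes\phi)\Delta_D(d)\otimes c' = d\otimes(id\otimes f)\Delta_{C'}(c')\}\subseteq D\otimes C'$, which is exactly the equalizer computing $D\otimes^C U^C((f))$ — this is the same kind of computation already carried out in the proof of Lemma~\ref{strong}, where $U^C$ was shown to send products in $Coalg C$ to cotensor products in $Vec^C$, relying on Lemma~\ref{eq} ($U$ preserves equalizers of coreflexive pairs). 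The coactions match by a direct Sweedler-notation check. The identity $L_f R^C = R^{C'} K_f$ then follows formally: $U^{C'} L_f R^C = K_f U^C R^C$ and by uniqueness of adjoints together with $K_f U^C = U^{C'} L_f$ one transports the adjunction $U^C \dashv R^C$ across $K_f$ to get $U^{C'} \dashv (L_f R^C)$, whence $L_f R^C \cong R^{C'} K_f$; alternatively one checks the unit compatibility (iii) directly in Sweedler notation using the explicit formula $\hat g(v) = \sum g(v_0)\otimes v_1$ for the adjunction transpose from the proof of Proposition~\ref{BC}(1), which simultaneously settles (iii).

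The main obstacle I anticipate is bookkeeping rather than conceptual: making the equalizer/pullback identifications ``on the nose'' so that the two composite functors are genuinely equal (not merely naturally isomorphic), and checking that the canonical comparison morphisms are identities under the chosen presentation of pullbacks in $Coalg$ as subcoalgebras of $D\otimes C'$. I would handle this by fixing, once and for all, the concrete model of the pullback in $Coalg$ used to define $L_f$ (the largest subcoalgebra of $D\otimes C'$ on which $\phi\pi_1 = f\pi_2$), exactly as in the proof of Lemma~\ref{coalgCcartesian}, so that $U^{C'} L_f$ and $f^* U^C$ literally produce the same underlying vector space with the same coaction, after which naturality in $(\phi)$ and compatibility with the adjunction units are routine verifications left, as is customary here, to the reader.
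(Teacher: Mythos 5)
Most of your argument tracks the paper's proof. You invoke Proposition~\ref{prop:ssmc} for $K_f$ exactly as the paper does, and your treatment of the key identity $K_fU^C=U^{C'}L_f$ --- identifying the pullback $X$ with the equalizer subspace of $D\otimes C'$ computing the cotensor product, via Lemma~\ref{eq} --- is the same computation the paper performs when it shows that $(\tilde x\otimes x)\circ\Delta_X$ equalizes the parallel pair. Your justification that $L_f$ is cartesian (base change preserves the terminal object and binary products by pasting of pullback squares) is a legitimate alternative to the paper's route, which instead exhibits a left adjoint $\tilde\Sigma_f((\phi))=(f\circ\phi)$ of $L_f$ and concludes that $L_f$ preserves all limits.

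There is, however, a genuine gap in your derivation of $L_fR^C=R^{C'}K_f$. You assert that it ``follows formally'' from $K_fU^C=U^{C'}L_f$ by uniqueness of adjoints, via an adjunction $U^{C'}\dashv (L_fR^C)$ --- but this does not typecheck ($L_fR^C$ has domain $Vec^C$, not $Vec^{C'}$), and the idea behind it is false in general: from a commuting square of functors whose verticals admit right adjoints one only obtains the \emph{mate}, a canonical natural transformation $L_fR^C\Rightarrow R^{C'}K_f$, and its invertibility is exactly a Beck--Chevalley-type condition that must be proved, not a consequence of uniqueness of adjoints. The paper closes this by introducing the left adjoints $\tilde\Sigma_f\dashv L_f$ and $\Sigma_f\dashv K_f$ and verifying the \emph{other} square $U^C\tilde\Sigma_f=\Sigma_fU^{C'}$ on the nose (both composites send $(\phi)$ to $(D,(id_D\otimes f\phi)\Delta_D)$); then $L_fR^C$ and $R^{C'}K_f$ are right adjoints of equal functors, hence naturally isomorphic. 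Since your write-up never introduces $\Sigma_f$ or $\tilde\Sigma_f$, it has no route to this conclusion, and your fallback of checking the unit transpose formula in Sweedler notation settles the unit condition (iii) but not the commutation $L_fR^C=R^{C'}K_f$ itself.
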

\begin{proof}
	The picture is the following
	$$
	\xymatrix
	{
		Coalg C \ar[dd]_{L_f}\ar@<0.5ex>[rr]^{U^C} && Vec^C \ar@<0.5ex>[ll]^{R^C} \ar[dd]^{K_f}\\
		\\
		Coalg C' \ar@<0.5ex>[rr]^{U^{C'}} && Vec^{C'} \ar@<0.5ex>[ll]^{R^{C'}}
	}
	$$
	We know that $K_f$ is symmetric strong monoidal and closed by Proposition \ref{prop:ssmc}.\\
	Also, it is easy to check that the functor $L_f$ has a left adjoint given by $\tilde \Sigma_f ((\phi))=(f\circ \phi)$, so $L_f$ preserves all limits and therefore it is cartesian. \\
	\ \\
	Now, given $\phi:D\to C$ a morphism of coalgebras, let us calculate $K_fU^C((\phi))$ and $U^{C'}L_f(\phi)$.\\
	Denote $U^C((\phi))=(D,\rho_D)$ where $\rho_D:D\to D\otimes C$ is given by $\rho_D=(1\otimes \phi)\circ \Delta_D$. Applying $K_f$ to it, we obtain the following equalizer in $Vec^{C'}$:
	$$
	\xymatrix{
		K_f(U^C((\phi))\ar[r] & D\otimes C' \ar@<0.5ex>[rrr]^-{\rho_D \otimes 1} \ar@<-0.5ex>[rrr]_-{1\otimes (f\otimes 1)(1\otimes \Delta_{C'})}&&& D\otimes C\otimes C'
	}.
	$$
	
	On the other hand, $L_f(\phi)=x:X\to C'$ defined by the pullback of $\phi$ and $f$ as follows
	
	$$
	\xymatrix
	{
		 X \ar[r]^x \ar[d]_{\tilde x} &C'\ar[d]^f\\
		D\ar[r]_\phi & C
	}
	$$
	
 	and we get that $U^{C'}L_f((\phi))$ is the $C'$-comodule $(X,\rho_X)$ where $\rho_X: X\to X\otimes C'$ is defined by $\rho_X=(1\otimes x)\circ \Delta_X$.\\
	
	To see that both constructions are isomorphic, it is straightforward to prove that
	$$
	(\tilde x\otimes x)\circ \Delta_X: X\to D\otimes C'
	$$
	is the equalizer of the parallel pair
	
	$$
	\xymatrix
	{
		D\otimes C' \ar@<0.5ex>[rrr]^-{\rho_D \otimes 1} \ar@<-0.5ex>[rrr]_-{1\otimes (f\otimes 1)(1\otimes \Delta_{C'})}&&& D\otimes C\otimes C'
	}.
	$$	
	
	Now, if we take $\tilde \Sigma_f: Coalg {C'}\to Coalg C$ and $\Sigma_f: Vec^C\to Vec^{C'}$ the respective left adjoints of $L_f$ and $K_f$ (see Proposition \ref{BC}), it is easy to prove that $U^C\tilde{\Sigma}_f=\Sigma_f U^{C'}$ and therefore, by taking right adjoints we get $L_fR^C=R^{C'}K_f$.\\
	
	To prove that $\eta'_{L_f}=L_f(\eta)$ note that the commutation of the squares implies $$L_fR^CU^C(D,\phi)=R^{C'}U^{C'}L_f(V).$$ \\
	As $\eta_{(D,\phi)}=id_{(D,\phi)}$ and $\eta'_{(D',\phi')}=id_{(D',\phi')}$, the equality to prove is obvious. 

\end{proof}
\ \\
A linear hyperdoctrine will be some kind of `` indexed linear-non linear adjunction'' that satisfies the Beck-Chevalley condition among other properties.
\begin{definition}\rm
Let
	$$
	\Phi: {\mathcal B}^{op}\to LNL
	$$
be a functor where ${\mathcal B}$ is cartesian whose objects are generated as finite products of a single object $C$. \\
We fix the following notation:
	\begin{itemize}
		\item for each object $I$ in $\mathcal{B}$, the $LNL$-adjunction $\Phi(I)$ is denoted by $U^I \dashv R^I: \Phi_1(I)\rightleftarrows \Phi_2(I)$,
		\item for each morphism $f$ in $\mathcal{B}$, the morphism of $LNL$-adjunctions $\Phi(f)$ is denoted by the pair of functors $(\Phi_1(f),\Phi_2(f))$
		\end{itemize}
	We say that $\Phi$ is a {\em linear hyperdoctrine} if:
	\begin{enumerate}	
		\item for each object $I$ in $\mathcal{B}$,there are funtors $\exists_I, \forall_I:\Phi_2(I\times C)\to \Phi_2 (I)$ that are respectively left and right adjoints to $\Phi_2(\pi_I): \Phi_2(I)\to \Phi_2(I\times C)$, where $\pi_I: I \times C\to I$ is the canonical projection.
		\item for each morphism $f:J\to I$ in $\mathcal{B}$, the following diagram commutes
		$$
		\xymatrix
		{
			\Phi_2(I\times C)\ar[rr]^{\forall_I} \ar[dd]_{\Phi_2(f\times id_C)}&& \Phi_2(I)\ar[dd]^{\Phi_2(f)}\\
			\\
		\Phi_2(J\times C)\ar[rr]_{\forall_J} && \Phi_2(J)
		 }
		$$
		\item similar properties to $(1)$ and $(2)$ hold for projections $C\times I\to I$.
	\end{enumerate}
\end{definition}

Let $C$ be a cosemisimple cocommutative coalgebra and $\mathcal{B}$ be the cartesian complete subcategory of $Coalg$ whose objects are all coalgebras $C^{\otimes n}$, where $n$ is natural (note that $C^{\otimes 0}$ is the trivial coalgebra $\Bbbk$).
 \begin{theorem}\rm
 The functor
 $$
 \Phi: \mathcal{B}^{op} \to LNL
 $$
 that:
 \begin{itemize}
 	\item takes a coalgebra $D$ in $\mathcal{B}$ into its associated $LNL$ adjunction as described in Section \ref{s:lnl}, that is
 $$
 \Phi(D)=U^D \dashv R^D:
 Coalg D \rightleftarrows Vec^D
 $$
 \item takes a morphism $f: D\to D'$ in $\mathcal{B}$ into the morphism $\Phi(f)=(L_f, K_f)$ in $LNL$.
 \end{itemize}
 is a linear hyperdoctrine.
\end{theorem}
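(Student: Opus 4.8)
The plan is to verify the three conditions in the definition of linear hyperdoctrine for the functor $\Phi$, relying heavily on the machinery already developed. First I would observe that $\mathcal{B}$ is indeed a cartesian subcategory of $Coalg$ whose objects are generated by finite products of the single object $C$: since $C$ is cosemisimple cocommutative, each $C^{\otimes n}=C\times\cdots\times C$ is again cosemisimple cocommutative (by the remark at the end of Section~\ref{ss:coss}), so $\mathcal{B}$ lands inside the coalgebras for which all the preceding lemmas apply. Then I would note that $\Phi$ is well defined on objects by the Proposition preceding this theorem (each $\Phi(D)=U^D\dashv R^D$ is an LNL adjunction, since $D\in\mathcal{B}$ is cosemisimple), and well defined on morphisms by the Proposition asserting that $(L_f,K_f)$ is a morphism of LNL adjunctions. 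Functoriality of $\Phi$ (that $\Phi$ preserves composition and identities) follows from $K_{f}=f^{*}$ together with Proposition~\ref{BC}(2), which gives $(\psi\circ\phi)^{*}\cong\phi^{*}\circ\psi^{*}$, and from the defining pullback description of $L_f$, whose functoriality in $f$ is again a standard pasting-of-pullbacks argument.

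Next I would establish condition $(1)$. Here the projection $\pi_I:I\times C\to I$ in $\mathcal{B}$ is the morphism $p_I=\varepsilon_C\otimes id_I$ (in the notation of the remark following Lemma~\ref{cartesian}), and $\Phi_2(\pi_I)=\pi_I^{*}$. By Proposition~\ref{BC}(1), $\pi_I^{*}$ has a left adjoint $\Sigma_{\pi_I}$, which I would take as $\exists_I$. For the right adjoint $\forall_I$, I would invoke the Proposition characterizing when $\phi^{*}$ has a right adjoint: $\pi_I^{*}$ has a right adjoint $\forall_{\pi_I}$ if and only if the $I$-comodule $U^{I}(\pi_I)=(I\otimes C,(id\otimes\pi_I)\Delta)$ is coflat. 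Since $I=C^{\otimes n}$ is cosemisimple, \emph{every} $I$-comodule is coflat (equivalently injective, by Proposition~\ref{p:injcoflat}), so the right adjoint $\forall_I:=\forall_{\pi_I}$ exists. This is the crucial place where cosemisimplicity of $C$, hence of every object of $\mathcal{B}$, is used.

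Condition $(2)$, the Beck--Chevalley condition for $\forall$, follows from the discussion in the paragraph ``\emph{Beck condition}'' immediately after that Proposition: given a morphism $f:J\to I$, the square with $\pi_I,\pi_J$ and $f,f\times id_C$ is a pullback in $\mathcal{B}$ (pullbacks of projections along $f$ are projections, precisely because products in $\mathcal{B}$ are honest cartesian products of cocommutative coalgebras), and since $\Sigma_{\pi_I}\dashv\pi_I^{*}\dashv\forall_{\pi_I}$ with $\Sigma$ satisfying the Beck condition of Proposition~\ref{BC}(3), the mate calculus gives that $\forall$ satisfies the dual Beck condition, i.e.\ $\Phi_2(f)\circ\forall_I\cong\forall_J\circ\Phi_2(f\times id_C)$; one still has to check the isomorphism is the canonical one, but this is formal. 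Condition $(3)$ is proved identically using the other projection $C\times I\to I$, which by cocommutativity is symmetric to the first case via the braiding $\sigma$ on $Coalg$, so no new argument is needed.

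I expect the main obstacle to be bookkeeping rather than conceptual: checking that all the ``canonical'' natural transformations coming out of the adjoint/mate machinery really are the canonical ones named in the definition, and in particular assembling the Beck--Chevalley isomorphism for $\forall$ from the one for $\Sigma$ via the two-sided adjunction $\Sigma_\phi\dashv\phi^{*}\dashv\forall_\phi$. A secondary point needing care is confirming that the morphisms $\pi_I$ in $\mathcal{B}$ genuinely coincide with coalgebra projections $\varepsilon_C\otimes id$ and that forming pullbacks of these along arbitrary $f:C^{\otimes m}\to C^{\otimes n}$ stays inside $\mathcal{B}$ up to isomorphism — but since $\mathcal{B}$ is defined to be the cartesian \emph{complete} subcategory on the $C^{\otimes n}$, this is automatic.
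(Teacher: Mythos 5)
Your proposal is correct and follows essentially the same route as the paper: $\exists_I=\Sigma_{\pi_I}$ and $\forall_I=\forall_{\pi_I}$ from the adjoint string $\Sigma_{\pi_I}\dashv\pi_I^*\dashv\forall_{\pi_I}$, Beck--Chevalley for $\forall$ obtained by taking right adjoints (mates) of the $\Sigma$ version in Proposition~\ref{BC}, and condition $(3)$ by cocommutativity. You actually make explicit a step the paper leaves implicit, namely that $\forall_{\pi_I}$ exists because every comodule over the cosemisimple coalgebra $I=C^{\otimes n}$ is coflat.
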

\begin{proof}
	Let us prove first condition (1).
We recall that
 $\Sigma_{\pi_I}$ is a left adjoint of $K_{\pi_I}$.\\

Note that, in fact, we proved that every $K_f$ admits a left and a right adjoint, in particular for $f=\pi_I$.\\
\ \\
Condition $(2)$ holds by taking right adjoints to the equality of the Chevalley-Beck condition proved in Proposition \ref{BC}.\\
\ \\
Condition $(3)$ is obvious by cocommutativity of $C$.

\end{proof}

\section{Towards a Model of Linear Polymorphism}

 When we interpret first order logic in an indexed category, we interpret terms as morphisms in the base, and predicates/formulas by the elements in the fibers. In second order logic, formulas and terms are not as clearly distinguished; we rather view formulas as particular terms (because we can substitute them for propositional variables).  Slightly simplifying, in the categorical model, substitution of terms into formulas is modeled by reindexing an element of a fiber (representing the formula) along a morphism in the base (representing the term). So, when we want to substitute one proposition by another, as we do in the forall-elimination rule, of second order propositional logic, we first have to transform one formula into a term. This is precisely what the phrase ``for all $A$ (representing a formula) in $\Psi(U^n)$ exists $f:U^n\rightarrow U$ (representing a term) such that $\Psi(f)(G)=A$" does in a generic condition.

In first order logic the syntax consists of (possibly sorts), terms, and formulas. This can be interpreted in an indexed category $\Phi:B^{op}\rightarrow Cat$ ($B$ with finite products) where sorts are interpreted as objects of $B$, terms are interpreted as morphisms in $B$, and formulas are interpreted by objects in an appropriate fiber. In second order propositional logic/polymorphism, we have no terms, but the formulas take their role. More precisely, the formulas take a ``dual role" functioning as terms and formulas as once. We see here a certain ``ambiguity"
between the notions of type, predicate, and term, of object and proof: a term
of type $A$ is a morphism into $A$, which is a predicate over $A$; a morphism $1\rightarrow A$ can be
viewed either as an object of type A or as a proof of the proposition A. 
Normally, we want formulas to be interpreted as objects in the fibers, so that we can talk about proofs as morphisms in the fibers. But we also want to be able to substitute one formula into another, and since substitution corresponds to reindexing (along morphisms in the base), we have to represent formulas also as morphisms in the base. Hence to each formula $\phi(X_1...X_n)$, we want to associate
\begin{itemize}
\item(1) an object in $\Psi(U^n)$, 
\item(2) a morphism of type $U^n\rightarrow U$
\end{itemize}
With the generic predicate, we can establish a correspondence between $(1)$ and $(2)$.
On one hand, given $f : U^n\rightarrow U$, $\Psi(f)(G)$ is an object of $\Psi(U^n)$. On the other hand, given an $X$ in $\Psi(U^n)$, the universal property of
the generic object gives an $f: U^n\rightarrow U$.

Tripos is the easiest example of this kind of situation. Since the indexed categories are really indexed preorders, and in this case the statement ``... and an
isomorphism ..." does not require a ``witness"  in a preorder viewed as a category, there is at most one isomorphism between any two objects.
If we omit the generic predicate from the definition of tripos, we get what is called a ``first order hyperdoctrine", which is a structure which
can model first order predicate logic. With a generic predicate, we can also model ``higher order logic", which means that we can interpret
quantification over truth values and power sets. The underlying set/object of the generic predicate is the type of ``truth values", and we can get ``power types" as exponential objects (assuming that the underlying category is cartesian closed).

Polymorphism can be viewed as a way of thinking about proofs in ``second order propositional logic". This is a smaller fragment of logic than what can be modeled in a tripos, but the difference is that we want to do it in a ``proof relevant" way, i.e. having a genuine indexed ``category" instead of an indexed preorder. 
Models of polymorphism are formalised by "$2\lambda\times$-hyperdoctrines" (see~\cite{kn:CROLE}). The
relation between triposes and such hyperdoctrines is that starting from a tripos, we restrict the underlying category to powers $Prop^n$ of $Prop$
(since we ``only" want to talk about propositions), but in exchange we want the fibers of the indexed category to be genuine categories, not only preorders. Via the propositions-types correspondence the system can be viewed as a programming language, but if we think about the types as sets then we obtain something that is against set theoretic intuitions, and causes "size problems". If we want to model polymorphism, the strength of polymorphism comes from the fact that we can substitute arbitrary formulas for propositional variables, and for this we need the generic predicate.


Therefore, in order to present a full model of Linear Polymorphism, we need to deal with the notion of a generic object in an indexed category. The thing is that it can be proved that there are no generic comodules over a cosemisimple cocommutative coalgebra. \\
We propose to avoid this constraint by considering smaller categories on each index. More precisely: instead of taking the category of all comodules $Vec^D$ in each index, we propose to take some apropriate full subcategories. The idea is to consider cosemisimple comodules whose decomposition into simple ones is such that there are at most $\kappa$ many simple of each isomorphism class (for some cardinal $\kappa$).\\
In some cases, we get positive answers. That is, there will be a generic objects in these  ``smaller'' indexed categories.\\
We think the ideas of last section adapted to these new context can give rise to models of Linear Polymorphism.

 \author Mariana Haim,\\
 {Centro de Matem\'atica, 
 	Facultad de Ciencias, \\
 	Universidad de la Rep\'ublica,\\
 	Montevideo, Uruguay.}\\
 {negra@cmat.edu.uy}\\

 \author Octavio Malherbe\\
 {
 	Departamento de Matem\'atica, Centro Universitario Regional, Este, Maldonado,\\Instituto de Matem\'atica y Estad\'\i stica Rafael Laguardia, Facultad de Ingenier\'\i a, Montevideo.\\ Universidad de la Rep\'ublica,\\
 	Uruguay.}\\
 {malherbe@fing.edu.uy}


\begin{thebibliography}{99}
	\bibitem{kn:A} Abella, A., {\em Cosemisimple coalgebras}, Annales des Sciences Math\'ematiques du Qu\'ebec, {\bf 30-2}, 2005.
	
	\bibitem{kn:AyL} Abramsky, S., Lenisa M., {\em Linear realisability and full completeness for typed lambda-calculi}, Annals of Pure and Applied Logic 134 (2005) 122-168.
	
	\bibitem{kn:Be} Benton, N.,  {\em A mixed linear and non-linear logic: Proofs, terms and models (extended abstract)}, Lectures Notes in Computer Science, {\bf 933}, 1994.
	
	\bibitem{kn:B} Borceux, F., {\em Handbook of Categorical Algebra}, Encyclopedia of Mathematics and Applications, {\bf 50}, 2008.
	
	\bibitem{kn:Bi} Bierman, G., {\em What is a categorical model of intuitionistic linear logic}, Lecture Notes in Computer Science, {\bf 902}, 1995.
	
		
    \bibitem{kn:BL} Block, R., Leroux, P. {\em Generalized dual coalgebras of algebras and applications to cofree coalgebras}, Journal of Pure and Applied Algebra, {\bf 36}, 1985.
	
	\bibitem{kn:BW} Brzezinski, T., Wisbauer, R.,  {\em Corings and comodules}, London Mathematical Society Lecture Notes Series, {\bf 309}, 2003.
	
	\bibitem{kn:CGW} Coquand, T., Gunter, C.A., Winskel, G. {\em Domain theoretic Models of Polymorphism}, Information and Computation, {\bf 81-2},1989.
	
	\bibitem{kn:CROLE} Crole, R., L.{\em Categories for types}, Cambridge University Press, {\bf 81-2},1994.
	
	
	\bibitem{kn:D} Doi, Y., {\em Homological coalgebra.}, Journal of the Mathematical Society of Japan, {\bf 33-1}, 1981.
	
	\bibitem{kn:DR} Dascalescu S., Nastasescu C., Raianu, S., {\em Hopf Algebras: an Introduction}, Pure and Applied Mathematics: A series of Monographs and Textbooks, 2000.
	
	\bibitem{kn:G} Girard, J.Y., {\em Linear Logic}, Theoretical Computer Science, {\bf 50-1}, 1987.
	
	\bibitem{kn:G2} Girard, J.Y. {\em The system F of variable types fifteen years later}, Theoretical Computer Science, {\bf 45}, 1986.
	
	\bibitem{kn:GP} Grunenfelder, L., Par\'e, R. {\em Families parametrized by coalgebras}, Journal of Algebra, {\bf 107}, 1987.
	
	\bibitem{kn:H} Hyland, J.M.E.,{\em The effective topos}, The L. E. J. Brouwer Centenary Symposium 165-216, 1982.
	
	
	\bibitem{kn:K} Kelly, G.M., {\em Doctrinal adjunction.}, Lectures Notes in Mathematics, Vol 420, 1974.

    \bibitem{kn:L} Lafont Y., {\em Logiques, cat\'egories et machines.}, PhD thesis, Universit\'e Paris 7, 1988.

	\bibitem{kn:M} Maneggia, P., {\em Models of Linear Polymorphism.}, thesis submitted to The University of Birmingham, School of Computer Science, for the degree of Doctor of Philosophy, 2004.
	
	\bibitem{kn:McL} Mac Lane, S., {\em Categories for the working mathematician.}, Graduate Texts in Mathematics, 1998.
	
	\bibitem{kn:Me} Melli\`es, P.A., {\em Categorical models of linear logic revisited.} Preprint, 2002.
	
	\bibitem{kn:PS} Par\'e, R., Schumacher, D., {\em Abstract Families and the Adjoint Functor Theorems.}, Lecture Notes in Mathematics, {\bf 661}, 1978.
	
	\bibitem{kn:Pi} Pitts, K.N. {\em Polymorphism is set theoretic, constructively}, Lectures Notes In Computer Science, {\bf 283}, 1987.
	
	\bibitem{kn:Pl} Plotkin, G., {\em Second order type theory and Recursion.}, Notes for Scott Fest, Unpublished manuscript, 1993. 
	
	\bibitem{kn:R} Reynolds, J.C., {\em Polymorphism is not set-theoretic}, Lectures Notes In Computer Science, {\bf 173}, 1984.
	
	
	
	\bibitem{kn:S2} Seely, R. {\em Categorical semantics for higher order polymorphic lambda calculus}, Journal of Symbolic Logic, {\bf 52-4}, 1987.
	
	\bibitem{kn:S} Seely, R., {\em Linear logic $*$-autonomous categories and cofree coalgebras}, In J. W. Gray and A. Scedrov, eds., Categories in Computer Science and Logic, {\bf 92} of Contemporary Mathematics, Amer. Math. Soc. 1989.
	
	\bibitem{kn:S3} Seely, R. {\em Polymorphic Linear Logic and Topos Model}  C.R. Math. Rep. Acad. Sci. Canada - Vol. {\bf XII}, No. 1, February 1990.
	
\end{thebibliography}
\end{document}